\numberwithin{equation}{section}
\theoremstyle{plain} 
\newtheorem{proposition}{Proposition}[section]  
\newtheorem{lemma}[proposition]{Lemma}
\newtheorem{corollary}[proposition]{Corollary} 
\newtheorem{theorem}[proposition]{Theorem} 
\theoremstyle{definition} 
\newtheorem{definition}[proposition]{Definition}
\newtheorem{definitions}[proposition]{Definitions}
\newtheorem{remark}[proposition]{Remark} 
\newtheorem{example}[proposition]{Example} 
\newtheorem{recalls}[proposition]{Recalls}
\newtheorem{remarks}[proposition]{Remarks}
\newcommand{\ag}{\mathfrak{a}}{}
\newcommand{\bg}{\mathfrak{b}}{}
\newcommand{\cg}{\mathfrak{c}}{}
{}
{}
\def\emptyset{\varnothing}
\newcommand\Edp{\operatorname{E-dp}}
\newcommand\Tcodp{\operatorname{T-codp}}
\newcommand\Tor{\operatorname{Tor}}
\newcommand\Hom{\operatorname{Hom}}
\newcommand\Ext{\operatorname{Ext}}
\newcommand\Ker{\operatorname{Ker}}
\newcommand\im{\operatorname{Im}}
\newcommand{\xx}{\underline x}
\newcommand{\yy}{\underline y}
\newcommand{\zz}{\underline z}
\newcommand{\gam}{\Gamma_{\mathfrak{a}}}
\newcommand{\Cech}{{\Check {C}}_{\xx}}
\newcommand{\qism}{\stackrel{\sim}{\longrightarrow}}
\author[Schenzel, Simon]{Peter Schenzel, Anne-Marie Simon}
\title[torsion of injective modules]{Torsion of injective modules and weakly pro-regular sequences}
\address{Martin-Luther-Universit\"at Halle-Wittenberg,
Institut f\"ur Informatik, D --- 06 099 Halle (Saale), Germany}
\email{schenzel@informatik.uni-halle.de}
\address{Service de G\'{e}om\'{e}trie diff\'{e}rentielle C.P. 218, Universit\'{e} Libre de Bruxelles, Campus Plaine, 
	Boulevard du Triomphe, B - 1050 Bruxelles, Belgium}
\email{amsimon@ulb.ac.be}
\subjclass[2010]
{Primary: 13C11; Secondary: 13B30, 13E05}
\keywords{injective module, torsion,  non-Noetherian commutative ring, weakly pro-regular sequences}
\begin{document}
\begin{abstract}
	Let $R$ a commutative ring, $\ag \subset R$ an ideal,  $I$  an injective 
	$R$-module and  $S \subset R$  a multiplicatively closed
	set. When $R$ is Noetherian it is well-known that the $\ag$-torsion sub-module $\Gamma_{\ag}(I)$, 
	the factor module $I/\Gamma_{\ag}(I)$ and the localization $I_S$ are again injective 
	$R$-modules. We investigate these properties in the case of a commutative 
	ring $R$ by means of a notion of relatively-$\ag$-injective $R$-modules. 
	In particular we get another characterization of weakly pro-regular sequences in terms of relatively 
	injective modules. Also we present examples of non-Noetherian commutative rings $R$ and 
	injective $R$-modules for  which the previous properties do not hold. Moreover, under some weak pro-regularity 
	conditions we obtain results of Mayer-Vietoris type. 
\end{abstract}

\date{March $10^{\text{th}}$, 2020}	
\maketitle

\section*{Introduction}

In this note $\ag$ denotes an ideal of a commutative ring $R$. The torsion functor with respect to $\ag$ is denoted by 
$\Gamma_{\ag}(\cdot)$, that is $\Gamma_{\ag}(M)=\{m\in M \mid \ag^tm=0 \text{ for some } t>0\} $. We say that the $R$-module $M$ is $\ag$-torsion if $M=\Gamma_{\ag}(M)$. The right-derived functors of 
$\Gamma_{\ag}(\cdot)$, the local cohomology functors, are denoted by 
$H^i_{\ag}(\cdot)$. When the ring is Noetherian it is well-known that for every 
injective $R$-module $I$, its $\ag$-torsion sub-module $\Gamma_{\ag}(I)$ is again 
injective. Moreover, $I/\Gamma_{\ag}(I)$ and $I_S$ are again injective 
$R$-modules and the natural map $I\to I_S$ is surjective. Here $S\subset R$ denotes a multiplicatively closed set.
 This is no longer the case in general. The main aim of the present paper is to analyse these conditions in the more general case of a commutative ring $R$.

When the ring $R$ is Noetherian it is known that the pair 
$(R, \ag)$ has the following property (referred to as property $B$): for all 
$\ag$-torsion $R$-module $M$ and all $i>0$ we have that $H^i_{\ag}(M)=0$. 
But the case of a Noetherian ring is 
not the only one with this property. For an ideal $\ag$ of a commutative ring $R$ 
we  show in \ref{bisrelinj1} that the pair $(R, \ag)$ has the 
property $B$ if and only if for all relatively-$\ag$-injective $R$-module $J$ it 
holds that $\Gamma_{\ag}(J)$ is again relatively-$\ag$-injective. Here an $R$-module $M$ is called 
relatively-$\ag$-injective if $\Ext^i_R(R/\bg, M)=0$ for all ideal $\bg$ 
containing a power of $\ag$ and all $i\geq 1$.  We also provide an  example of an injective $R$-module $I$ 
such that $\gam (I)$ is not 
relatively-$\ag$-injective (hence also not injective).
The more restricted question to know when $\gam (I)$ is injective has been 
investigated by Quy and Rohrer in \cite{QR}.

When the ideal $\ag$ is finitely generated, say by a sequence 
$\xx=x_1, \ldots, x_k$, we also show in \ref{weak} that the pair 
$(R, \ag)$ has the property $B$ if and only if the sequence $\xx$ is weakly 
pro-regular (see \cite{lip2}, \cite{S} and \cite{SpSa} for more details on the 
notion of weakly pro-regular sequences). That is, we prove another 
characterization of weakly pro-regular sequences (see also \ref{corweak}). Note that the notion of weakly pro-regular sequences plays also an essential r\^ole in  the study of both
completion and local cohomology (see \cite{SpSa} for more details).

Now let $\ag$ and $\bg$ be two finitely generated ideals of a commutative ring $R$. Under some weak pro-regularity conditions we obtain in \ref{mv-5} a long exact Mayer-Vietoris sequence involving these ideals. Under the same conditions, we then prove in \ref{mv-8} that the  the set of ideals $\{\mathfrak{a}^n \cap \mathfrak{b}^n \mid n\geq 1\}$ defines the $(\mathfrak{a} \cap \mathfrak{b})$-adic topology (for a Noetherian ring, this also follows by the Artin-Rees Lemma).

After some investigations about the natural map $M\to M_S$ we provide in \ref{loc-3} another example 
of an injective $R$-module $I$ and a multiplicatively closed set $S \subset R$ such that the localization $I_S$ 
is not injective. That this does not hold in general was first shown  by Dade in \cite{D}. Note that the problems to know when $I_S$ or $\gam (I)$ are injective for an injective module $I$ seem to be related, at least when $S$ consists of the powers of a single element $x$. In that case, we also show in \ref{bisloc-2} that the natural map $M\to M_x$ is surjective if and only if $M/\Gamma_{xR}(M)$ is relatively-$xR$-injective.

In a final section we relate the $\ag$-transform $\mathcal{D}_{\ag}(M)$ of a module 
$M$ to some module of fractions. When the ideal $\ag$ is finitely generated, say by 
a sequence $\xx=x_1, \ldots , x_k$, we provide in \ref{sup-10} and \ref{sup-5} 
natural injections $ \mathcal{D}_{\ag}(M) \hookrightarrow 
H^0(\check{D}_{\xx}^{[1]}\otimes_RM)$  and $H^1_{\ag}(M) \hookrightarrow H^1(\Cech 
\otimes_R M)$, where $\check{D}_{\xx}^{[1]}$ and $\Cech$ denote respectively  the 
global \v{C}ech complex and the \v{C}ech complex. It follows that the deviation of 
the zero-th global \v{C}ech cohomology from the ideal transform is isomorphic 
to the deviation of the first local \v{C}ech cohomology from the first local 
cohomology. We provide an example for which these deviations do not vanish, and we 
note that both deviations vanish when the sequence $\xx$ is weakly  pro-regular.
 
For commutative algebra we refer to \cite{mats}, for homological algebra we also refer to \cite{We}.

\section{Relatively $\ag$-injective modules}

Let $I$ be an injective module over the commutative ring $R$ and let $\ag\subset R$ denote an ideal. To obtain some informations on the submodule $\gam (I)$ a relative notion will be useful.

\begin{definition} \label{relinj0}
	Let $\ag$ be an ideal of a commutative ring $R$. We say that an
	$R$-module $M$ is \textit{relatively-$\ag$-injective} if
	$\Ext^i_R(R/\bg, M)=0$ for all ideal $\bg$ containing a power of $\ag$ and all $i\geq 1$.
\end{definition}

\begin{remarks} \label{rem1} 
	(a) For a relatively-$\ag$-injective $R$-module $M$  we note that $H^i_{\ag}(M)=0$ for all $i\geq 1$. That is because in general we have
	 $H^i_{\ag}(M)=\varinjlim \Ext^i_R(R/\ag^t, M)$. 
	
	(b) Let $0\to M'\to M\to M'' \to 0$ be a short exact sequence of $R$-modules. If both $M'$ and $M$ are 
	relatively-$\ag$-injective, then so is $M''$, as is easily seen by the long exact sequence of the 
	$\Ext^i_R(R/\bg, \cdot)$, where $\bg$ is any ideal containing a power of $\ag$. In particular the short sequences 
	\begin{gather*} 
		0\to \Hom_R(R/\bg,M') \to \Hom_R(R/\bg, M) \to \Hom_R(R/\bg, M'') 
		\to 0 \text { and  }\\ 0\to \gam (M') \to \gam (M)\to \gam (M'') \to 0 
	\end{gather*}  are again exact.
	
	(c) By a standard cohomological argument it follows
	 that the local cohomology of an $R$-module $M$ may be
	computed with a right resolution $J^{\bullet}$ of $M$ consisting of relatively-$\ag$-injective $R$-modules:
	for such a resolution we have $H^i(\gam (J^{\bullet}))\cong H^i_{\ag}(M)$.\\
	(To  see this let  $I^{\bullet}$ be an injective resolution of $M$. Then there is a quasi-isomorphism 
	$f: J^{\bullet}\qism I^{\bullet}$ and the cone $C(f)$ is a left-bounded exact complex of relatively-$\ag$-injective $R$-modules. By view of (b) we have that $\gam (C(f))$ is exact. Hence $\gam (f): \gam (J^{\bullet})\to \gam (I^{\bullet})$ is a quasi-isomorphism. Or see  e.g. \cite{We}  for more details around  cohomological arguments.)
	
	Similarly note also that $\Ext_R^i(R/\bg, M) \cong H^i( \Hom_R(R/\bg, J)$,  where $\bg$ is any ideal containing a power of $\ag$. 
\end{remarks}

For relatively $\ag$-injective modules the Ext-depth and  the Tor-codepth with respect to $\ag$ come also into play.

\begin{definitions} \label{edp} (see also \cite{St90} and \cite{SpSa})
	Let $\ag$ be an ideal of a commutative ring $R$ and $M$ an $R$-module. The 
	Ext-depth  and the Tor-codepth of $M$ with respect to $\ag$ are defined 
	respectively by
	 	\begin{gather*}
		\Edp(\mathfrak a, M)=\inf\{i \mid \Ext^i_R(R/\mathfrak a, M)\neq 0\} \\
		\Tcodp(\mathfrak a, M)=\inf\{i \mid \Tor_i^R(R/\mathfrak a, M)\neq 0\}.
		\end{gather*}  
	where the infimum  is taken over the ordered set $\mathbb N \cup\{\infty\}$. Therefore $\Edp(\ag, M)=\infty$ means that $\Ext_R^i(R/\ag, M)=0$ for all $i \geq 0$. 
	
	Recall that $\Edp(\ag, M)=\inf\{i \in \mathbb{N} \mid H^i_{\ag}(M)\neq 0\}$ and that 
	\begin{gather*}
	\Edp(\ag, M)=\Edp(\ag^n, M) \leq \Edp(\bg, M)\\
	\Tcodp(\ag, M)=\Tcodp(\ag^n, M) \leq \Tcodp(\bg, M) 
	\end{gather*} 
	for all $n\geq 1$ and  any ideal $\bg$ containing a power $\ag^t$ of $\ag$   
	(see \cite[Propositions 5.3.15, 5.3 16, 5.3.11]{St90} and 
	\cite[Chapters 3 and 5]{SpSa}).
	
	When the ideal $\ag$ is finitely generated recall also that 
	$$\Edp(\ag, M)=\infty \text{ if and only if } \Tcodp(\ag, M)=\infty ,$$
	 see \cite[6.1.8]{St90} or \cite[Chapter 5]{SpSa} for a complex version. 
\end{definitions}

In this section we investigate the following question: Given an ideal $\ag$ 
of a commutative ring $R$ and a relatively-$\ag$-injective $R$-module $J$, 
when do we have that $\gam (J)$ also is relatively-$\ag$-injective? We may also wonder when 
the quotient $J/\gam (J)$ is relatively-$\ag$-injective. It turns out that both 
of these properties are equivalent.

\begin{proposition} \label{relinj-2} 
	Let $\ag$ be an ideal of the  commutative ring $R$ and let $J$ denote a relatively-$\ag$-injective $R$-module. The following conditions are equivalent:
	\begin{itemize}
	\item[(i)] $\gam(J)$  is  relatively-$\ag$-injective.
	\item[(ii)] $J/\gam(J)$  is  relatively-$\ag$-injective.
	\item[(iii)]  $\Edp(\ag, J/\gam (J))=\infty$.
	\end{itemize}
\end{proposition}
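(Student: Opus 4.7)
The plan is to reduce all three conditions to statements about $\Ext^i_R(R/\bg, \cdot)$ via the short exact sequence
\[
0 \to \gam(J) \to J \to J/\gam(J) \to 0,
\]
where $\bg$ denotes any ideal of $R$ containing a power of $\ag$. Since $J$ is relatively-$\ag$-injective, $\Ext^i_R(R/\bg, J) = 0$ for $i \geq 1$. Moreover, every element of $\Hom_R(R/\bg, J) = \{y \in J \mid \bg y = 0\}$ is killed by a power of $\ag$ and hence lies in $\gam(J)$; consequently the natural map $\Hom_R(R/\bg, J) \to \Hom_R(R/\bg, J/\gam(J))$ is zero. Splicing these two observations into the long exact sequence of $\Ext$ produces the key dimension-shift isomorphism
\[
\Ext^i_R(R/\bg, J/\gam(J)) \;\cong\; \Ext^{i+1}_R(R/\bg, \gam(J)) \qquad (i \geq 0),
\]
natural in $\bg$.

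Given this isomorphism, condition (i) — the vanishing of $\Ext^j_R(R/\bg, \gam(J))$ for all $j \geq 1$ and all $\bg \supseteq \ag^t$ — translates into the vanishing of $\Ext^i_R(R/\bg, J/\gam(J))$ for all $i \geq 0$ and all such $\bg$, i.e.\ into $\Edp(\bg, J/\gam(J)) = \infty$ for every such $\bg$. By the monotonicity $\Edp(\ag, M) \leq \Edp(\bg, M)$ recalled in \ref{edp}, this global condition collapses to the single assertion $\Edp(\ag, J/\gam(J)) = \infty$, which is (iii); hence (i) $\iff$ (iii). The implication (i) $\Rightarrow$ (ii) is then immediate from Remark \ref{rem1}(b) applied to the short exact sequence, and (iii) $\Rightarrow$ (ii) follows because (iii) gives the vanishing of $\Ext^i_R(R/\ag, J/\gam(J))$ for every $i \geq 0$, which the $\Edp$-inequality upgrades to the same vanishing for every $\bg \supseteq \ag^t$, whence in particular (ii).

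The subtle remaining direction is (ii) $\Rightarrow$ (iii) (equivalently (ii) $\Rightarrow$ (i)). Assuming (ii), the isomorphism already delivers $\Ext^j_R(R/\bg, \gam(J)) = 0$ for $j \geq 2$ and every $\bg \supseteq \ag^t$, so what is left to extract from (ii) is the vanishing of $\Hom_R(R/\ag, J/\gam(J))$, equivalently — via the $i = 0$ case of the isomorphism — the vanishing of $\Ext^1_R(R/\ag, \gam(J))$. I would argue this by starting from $\bar y = y + \gam(J)$ with $\ag \bar y = 0$: under the isomorphism, $\bar y$ corresponds to the class of $\phi\colon \ag \to \gam(J)$, $a \mapsto ay$. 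Relative injectivity of $J$ furnishes an extension $\tilde\phi\colon R \to J$, $1 \mapsto z$, with $\ag z \subseteq \gam(J)$. The hard part will be forcing the extension to land inside $\gam(J)$, for this would give $z \in \gam(J)$ and hence $\bar y = 0$. My strategy is to apply (ii) to the ideal $\bg = \Ann_R(\bar y)$, which contains $\ag$, and to track the connecting classes through the natural isomorphism, using that the vanishing of $\Ext^1_R(R/\bg, J/\gam(J))$ produced by (ii) controls the obstruction to modifying $z$ by an element of $\gam(J)$ so as to bring it inside $\gam(J)$.
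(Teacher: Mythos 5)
Your dimension-shift isomorphism and the resulting equivalence (i) $\iff$ (iii) are correct, as are the implications (i) $\Rightarrow$ (ii) and (iii) $\Rightarrow$ (ii). The genuine gap is (ii) $\Rightarrow$ (iii), for which you offer only a strategy, and the strategy does not close: applying (ii) to $\bg = \Ann_R(\bar y)$ kills $\Ext^1_R(R/\bg, J/\gam(J))$, but the element $\bar y$ lives in $\Hom_R(R/\bg, J/\gam(J))$, which condition (ii) says nothing about, so you have not shown $\bar y = 0$.

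What you are missing is an elementary observation that the paper states at the outset and that removes the difficulty entirely: $\Hom_R(R/\ag, J/\gam(J)) = 0$. Indeed, if $y \in J$ satisfies $\ag y \subseteq \gam(J)$, then a power of $\ag$ already annihilates $y$ (this is clear when $\ag$ is finitely generated, and the paper treats it as automatic in general), so $y \in \gam(J)$ and $\bar y = 0$. Granting it, (ii) $\Rightarrow$ (iii) is immediate: condition (ii) supplies the vanishing of $\Ext^i_R(R/\ag, J/\gam(J))$ for all $i \geq 1$, the observation supplies the case $i = 0$, and together these give $\Edp(\ag, J/\gam(J)) = \infty$. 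There is no extension problem to solve and no obstruction class to chase. Beyond that, the paper proves (iii) $\Rightarrow$ (i) by reading the isomorphisms $\Ext^i_R(R/\bg, \gam(J)) \cong \Ext^i_R(R/\bg, J)$ directly off the long exact sequence, rather than packaging them as a dimension shift; that is only a presentational difference from your version, which is fine.
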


\begin{proof} 
	We consider the exact sequence 
	$$
	0\to \gam (J) \to J \to J/\gam (J)\to 0
	$$
	and note that the implication (i)$\Rightarrow$(ii) follows by the remark in \ref{rem1} (b). 
	
	For the remaining part of the proof we first note that
	$$
	\Hom_R(R/\ag, \gam (J)) \cong \Hom_R(R/\ag, J) \mbox{ and } 
	\Hom_R(R/\ag, J/\gam (J))=0. 
	$$
	Then we consider the long exact sequence of the $\Ext _R^i(R/\ag, \cdot)$ 
	applied to the above short exact sequence. 
	If $J/\gam(J)$ is relatively-$\ag$-injective, it yields by the previous remark 
	that $\Ext_R^i(R/\mathfrak{a}, J/\Gamma_{\mathfrak{a}}(J)) = 0$ for all $i \geq 0$, i.e. $\Edp(\ag, J/\gam (J))=\infty$ by the definition.
	
	Now assume   $\Edp(\ag, J/\gam (J))=\infty$, so that 
	$\Ext^i_R(R/\bg, J/\gam (J))=0$ for any ideal $\bg$ containing a power of 
	$\ag$ and all $i\geq 0$ (see \ref{edp}). By view of the long exact cohomology sequence of 
	the $\Ext^i_R(R/\bg, \cdot)$ we get the isomorphisms $\Ext_R^i(R/\bg, \gam (J)) \cong \Ext_R^i(R/\bg, J)$ 
	for all $i\geq 0$. Whence $\Ext_R^i(R/\bg, \gam (J))=0$ for all $i>0$ since $J$ is relatively-$\ag$-injective.
	 This finishes the proof.
\end{proof}

\begin{corollary} \label{correlinj-2} 
	Let $\ag$ be an ideal of the  commutative ring $R$ and let $J$ be a relatively-$\ag$-injective $R$-module. Assume that the ideal $\ag$ is finitely generated, say by a sequence $\xx=x_1, \ldots, x_k$.
	If $\gam(J)$  is  relatively-$\ag$-injective, then 
	$$
	J=  (x_1^{n_1},\ldots,x_k^{n_k})J +\gam (J)
	$$
	for every $k$-tupel $(n_1,\ldots,n_k) \in (\mathbb{N}_+)^k$.
\end{corollary}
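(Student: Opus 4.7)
The plan is to reduce the claimed equality to a statement that $N := J/\gam(J)$ equals $\bg N$ for $\bg = (x_1^{n_1}, \ldots, x_k^{n_k})$, and then to extract this from Proposition \ref{relinj-2} together with the equivalence between infinite Ext-depth and infinite Tor-codepth in the finitely generated case (as recorded in \ref{edp}).

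More precisely, I would first invoke Proposition \ref{relinj-2}: the hypothesis that $\gam(J)$ is relatively-$\ag$-injective gives $\Edp(\ag, J/\gam(J)) = \infty$. Since $\ag$ is finitely generated, the final assertion of \ref{edp} translates this into $\Tcodp(\ag, J/\gam(J)) = \infty$. The ideal $\bg = (x_1^{n_1}, \ldots, x_k^{n_k})$ contains the power $\ag^{n_1+\cdots+n_k}$ (any monomial of that total degree in the $x_i$'s must have some exponent $\geq n_i$), so the inequality $\Tcodp(\ag, M) \leq \Tcodp(\bg, M)$ from \ref{edp} yields $\Tcodp(\bg, J/\gam(J)) = \infty$ as well.

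In particular, taking $i = 0$ in the definition,
$$
\Tor_0^R(R/\bg, J/\gam(J)) = (J/\gam(J))/\bg (J/\gam(J)) = 0,
$$
so $J/\gam(J) = \bg \cdot (J/\gam(J))$. Pulling back to $J$ this means $J = \bg J + \gam(J) = (x_1^{n_1}, \ldots, x_k^{n_k}) J + \gam(J)$, which is the required identity.

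There is no real obstacle here once Proposition \ref{relinj-2} is in place; the only point to check carefully is the passage from $\Edp(\ag, \cdot) = \infty$ to $\Tcodp(\bg, \cdot) = \infty$ for the specific ideal $\bg$, which relies on the finite generation of $\ag$ and on verifying that $\bg$ contains a power of $\ag$ — a routine combinatorial observation on monomials.
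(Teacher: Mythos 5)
Your argument is correct and follows essentially the same route as the paper's proof: invoke Proposition \ref{relinj-2} to get $\Edp(\ag, J/\gam(J))=\infty$, convert to $\Tcodp$ via \ref{edp}, transfer to the ideal $(x_1^{n_1},\ldots,x_k^{n_k})$ since it contains a power of $\ag$, and read off the $\Tor_0$ vanishing. The only cosmetic difference is that you spell out the power of $\ag$ contained in $(x_1^{n_1},\ldots,x_k^{n_k})$ and the $\Tcodp$ inequality explicitly, where the paper simply asserts these; the substance is identical.
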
 

\begin{proof} 
	Write $\ag^{\underline{n}}$ for the ideal generated by $x_1^{n_1},\ldots,x_k^{n_k}$ for the $k$-tupel $\underline{n} = (n_1,\ldots,n_k)$. Note that 
	$\ag^{\underline{n}}$ contains a power of $\ag$. If $\gam(J)$  is  relatively-$\ag$-injective, then $\Edp(\ag, J/\gam (J)=\infty$ (see \ref{relinj-2}). Whence 
	$$
	\Tcodp(\ag, J/\gam (J))=\infty=\Tcodp(\ag^{\underline{n}}, J/\gam (J))
	$$ 
	(see \ref{edp}). In particular  
	$J/\gam (J))=\ag^{\underline{n}}\cdot (J/\gam(J))$ and the conclusion follows.
\end{proof}

In the case of a singly generated ideal see also the more precise 
\ref{bisloc-2}.

Here is   the main result of this section. It is a refinement of \cite[Proposition 2.7.10]{SpSa}.

\begin{proposition} \label{bisrelinj1} 
	Let $\ag$ denote any ideal of the  commutative ring $R$. The following conditions are equivalent:
	\begin{itemize}
	\item[(i)] $H^i_{\ag}(M)=0$ for all $i>0$ and any $\ag$-torsion 
	$R$-modules $M$.
	\item[(ii)] $\gam(J)$ is relatively-$\ag$-injective for any relatively-$\ag$-injective $R$-module $J$.
	\item[(iii)] $\gam(I)$  is  relatively-$\ag$-injective for any injective $R$-modules $I$.
	\end{itemize}
\end{proposition}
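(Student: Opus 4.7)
The plan is to establish the cycle (ii) $\Rightarrow$ (iii) $\Rightarrow$ (i) $\Rightarrow$ (ii). The implication (ii) $\Rightarrow$ (iii) is immediate, since every injective $R$-module is in particular relatively-$\ag$-injective, so the hypothesis in (ii) applies.

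For (i) $\Rightarrow$ (ii), let $J$ be relatively-$\ag$-injective and consider the short exact sequence
$$
0 \to \gam(J) \to J \to J/\gam(J) \to 0.
$$
By Remark \ref{rem1}(a), $H^i_{\ag}(J) = 0$ for $i \geq 1$, and by the hypothesis (i) applied to the $\ag$-torsion module $\gam(J)$, also $H^i_{\ag}(\gam(J)) = 0$ for $i \geq 1$. The long exact sequence of local cohomology then forces $H^i_{\ag}(J/\gam(J)) = 0$ for all $i \geq 1$, while the vanishing at $i = 0$ is automatic since $\gam(J/\gam(J)) = 0$. Hence $\Edp(\ag, J/\gam(J)) = \infty$, and Proposition \ref{relinj-2} yields that $\gam(J)$ is relatively-$\ag$-injective.

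The main content of the proposition is (iii) $\Rightarrow$ (i). Given an $\ag$-torsion module $M$, the plan is to build inductively a right resolution $0 \to M \to J^\bullet$ in which every $J^j$ is both $\ag$-torsion and relatively-$\ag$-injective. To start, embed $M$ into an injective $I^0$; since $M$ is $\ag$-torsion, $M \subset \gam(I^0) =: J^0$, and $J^0$ is relatively-$\ag$-injective by (iii). The quotient $J^0/M$ is again $\ag$-torsion, so the same recipe applied to it produces $J^1 = \gam(I^1)$ for an injective $I^1$ containing $J^0/M$; setting the differential $J^0 \to J^1$ as the composition $J^0 \twoheadrightarrow J^0/M \hookrightarrow J^1$ and iterating yields the desired exact complex $0 \to M \to J^0 \to J^1 \to \cdots$. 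Since each $J^j$ is $\ag$-torsion we have $\gam(J^j) = J^j$, so by Remark \ref{rem1}(c),
$$
H^i_{\ag}(M) \;\cong\; H^i(\gam(J^\bullet)) \;=\; H^i(J^\bullet),
$$
which vanishes for $i \geq 1$ by the exactness of the resolution. The only delicate point is verifying that the inductive construction genuinely produces a resolution by modules in the relevant class, which hinges precisely on the hypothesis (iii) being applied to the injective envelopes chosen at each step; once this is in place, Remark \ref{rem1}(c) supplies the conclusion.
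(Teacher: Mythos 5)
Your proof is correct and follows essentially the same route as the paper: the implication (iii) $\Rightarrow$ (i) is handled by the identical inductive construction of an $\ag$-torsion, relatively-$\ag$-injective right resolution of $M$ followed by Remark \ref{rem1}(c), and (ii) $\Rightarrow$ (iii) is trivial in both. The one small difference is in (i) $\Rightarrow$ (ii): after concluding $\Edp(\ag, J/\gam(J))=\infty$, you cleanly invoke Proposition \ref{relinj-2} to finish, whereas the paper re-runs the $\Ext$ long exact sequence argument directly; your shortcut is a harmless simplification that reuses the already-established equivalence.
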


\begin{proof} 
	(i)$\Rightarrow$(ii):  This is in \cite[Proposition 2.7.10]{SpSa}. Let us recall the  
	proof. Let $J$ denote a relatively-$\ag$-injective $R$-module and put 
	$N=J/\gam (J)$. We consider the short exact sequence
	$$
	0\to \gam (J) \to J \to N\to 0
	$$
	and the associated long exact sequence in local cohomology. By the 
	condition in (i) we have that $H^i_{\ag}(\gam (J))=0$ for all $i\geq 1$. 
	Furthermore, by the hypothesis on $J$ we also have $H^i_{\ag}(J)=0$
	for all $i\geq 1$ (see \ref{rem1}). By the local cohomology long exact sequence 
	it follows that $H^i_{\ag}(N)=0$ for all 
	$i\geq 0$. Hence 
	$$
	\Edp(\ag, N)= \inf\{i \in \mathbb{N} \mid \Ext^i_R(R/\ag, N)\neq 0 \} = \infty,
	$$ 
	see \ref{edp}. Therefore we get that  $\Ext_R^i(R/\bg, N)=0$ for all $i\geq 0$ and 
	all ideals $\bg$ containing a power of $\ag$ (see again \ref{edp}). 
	Now the conclusion follows by the long exact sequence of the 
	$\Ext_R^i(R/\bg, \cdot)$ associated to the above short exact sequence.
	
	(ii)$\Rightarrow$(iii): This is obvious.
	
	(iii)$\Rightarrow$(i):  Let $M$ denote an $\ag$-torsion $R$-module and let $I^0$ denotes its injective hull. There is a short exact sequence 
	$$
	0\to M\to \gam(I^0)\to M_1\to 0
	$$
	where $\gam(I^0)$ is relatively-$\ag$-injective by the condition in (iii).
	Note that the module $M_1$ also is $\ag$-torsion. We iterate the process 
	and obtain a right resolution $J^{\bullet}$ of $M$  by 
	means of $\ag$-torsion relatively-$\ag$-injective modules. 
	Then, by view of \ref{rem1} we have that 
	$$
	H^i_{\ag}(M)\cong H^i(\gam(J^{\bullet})) =H^i(J^{\bullet})=0
	$$ 
	for all $i>0$.
\end{proof}

Here is an explicit example of an injective $R$-module $I$ such that $\gam (I)$ 
is not relatively-$\ag$-injective.

\begin{example} \label{bisexrelinj}
	Let $R = \Bbbk[[x]]$ denote the power series ring in one variable over 
	the field $\Bbbk$. Let $E = E_R(\Bbbk)$ denote the injective hull 
	of the residue field. Then define $S = R \ltimes E$, the idealization  
	of $R$ by its $R$-module $E$. That is, $S=R\oplus E$ as an 
	$R$-module with a multiplication on $S$  defined by $(r, r)\cdot (r', e')=(rr', re'+r'e)$ for all $r, r' \in R$ and $e, e'\in E$.	
	By a result of Faith \cite{Fc} we have that the commutative ring $S$ is self-injective. More precisely, 
	there is an isomorphism of $S$-modules $\Hom_R(S, E)\cong S$  
	(see  \cite[Theorem A.4.6]{SpSa}). We consider the ideal $\ag :=(x, 0)S$ 
	of $S$ and note $\Gamma_{\ag}(S) =0\ltimes E$. Then 
	$\Gamma_{\ag}(S)$ is not injective as an $S$-module (see \cite[2.8.8]{SpSa}).
	
	Here we claim that moreover $\Gamma_{\ag}(S)$ is not relatively 
	$\ag$-injective as an $S$-module. First note that $\ag=xR\ltimes E$ because $E$ 
	is $x$-divisible (mulplication by $x$ is surjective on $E$). Hence $S/\ag \cong \Bbbk$. Since 
	$S/0\ltimes E \cong R$ 
	it follows that the short exact sequence 
	$0\to R \stackrel x \to R \to \Bbbk \to 0$ is also a short exact sequence of 
	$S$-modules. As it is not split exact it yields that $\Ext^1_S(\Bbbk, R)\neq 0$. Then consider the 
	short exact  sequence of $S$-modules
	$$0\to 0\ltimes E \to S \to R\to 0$$
	and the associated long exact sequence of the $\Ext^i_S(\Bbbk, \cdot)$.  It
	induces the exact sequence 
	$$
	0=\Ext_S^1(\Bbbk, S) \to \Ext_S^1(\Bbbk, R) \to \Ext^2_S(\Bbbk, 0\ltimes E).
	$$
	Hence $\Ext^2_S(S/\ag, 0\ltimes E)=\Ext^2_S(\Bbbk, 0\ltimes E)\neq 0$ and $\Gamma_{\ag}(S) =0\ltimes E$ 
	is not relatively-$\ag$-injective. 

	In this example note that the ascending sequence of ideals $0:_S(x, 0)^t = 0 \ltimes 0:_E x^t$, $t>0$
	 does not stabilizes. 
\end{example}
 
We end the section with other properties of relatively-$\ag$-injective $R$-modules, though we do not really need these in this paper.

\begin{proposition}\label{prinj}
	Let $\ag$ be an ideal of a commutative ring $R$ and  $J$
	 a relatively-$\ag$-injective $R$-module. Then the following holds:
	 \begin{itemize}
     \item[(a)] $\Hom_R(R/\bg, J)$ is $R/\bg$-injective for all ideals $\bg$ containing a power of $\ag$.
	 \item[(b)] $\Ext^i_R(N, J)=0$ for all $\ag$-torsion module $N$ and all $i\geq 1$.
 	 \end{itemize}
\end{proposition}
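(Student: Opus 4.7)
The plan is to establish (a) first by a change-of-rings calculation, and then to derive (b) in two stages: first for modules annihilated by some ideal $\bg\supset\ag^t$, and then for arbitrary $\ag$-torsion modules through a Mittag-Leffler argument.

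For (a), I would fix such a $\bg$, let $M$ be an arbitrary $R/\bg$-module, and pick a free $R/\bg$-resolution $F_\bullet \to M$, so that each $F_n$ is a direct sum of copies of $R/\bg$. Since $J$ is relatively-$\ag$-injective, $\Ext^j_R(R/\bg, J)=0$ for $j\geq 1$; because Ext converts direct sums in the first variable into products, the same vanishing holds for each $F_n$. Thus $F_\bullet$ is a $\Hom_R(-,J)$-acyclic resolution of $M$ over $R$, and invoking the adjunction $\Hom_R(F_n, J) = \Hom_{R/\bg}(F_n, \Hom_R(R/\bg, J))$ (valid because $\bg F_n=0$), we obtain
\begin{equation*}
\Ext^i_R(M, J) \cong H^i(\Hom_R(F_\bullet, J)) \cong H^i(\Hom_{R/\bg}(F_\bullet, \Hom_R(R/\bg, J))) \cong \Ext^i_{R/\bg}(M, \Hom_R(R/\bg, J)).
\end{equation*}
Specializing to $M=R/\bg'$ for any ideal $\bg' \supset \bg$, the left-hand side vanishes for $i\geq 1$ (since $\bg'$ also contains a power of $\ag$), so Baer's criterion over $R/\bg$ delivers the $R/\bg$-injectivity of $\Hom_R(R/\bg, J)$.

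For (b), the same isomorphism chain, read in the opposite direction, immediately gives $\Ext^i_R(M, J)=0$ for all $i\geq 1$ whenever $M$ is annihilated by some $\bg \supset \ag^t$, because the right-hand side now vanishes thanks to (a). In particular, for any $\ag$-torsion module $N$ the submodules $N_t:=(0:_N \ag^t)$ and the successive quotients $N_{t+1}/N_t$ (which are annihilated by $\ag$, because $\ag\cdot N_{t+1}\subset N_t$) satisfy $\Ext^i_R(-, J)=0$ for $i\geq 1$.

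To handle a general $\ag$-torsion module $N=\varinjlim_t N_t$, I would appeal to the telescope short exact sequence
\begin{equation*}
0 \to \bigoplus_t N_t \xrightarrow{1-s} \bigoplus_t N_t \to N \to 0.
\end{equation*}
Since $\Ext^i_R(\bigoplus_t N_t, J)=\prod_t \Ext^i_R(N_t, J)=0$ for $i\geq 1$, the associated long exact sequence forces $\Ext^i_R(N, J)=0$ outright for $i\geq 2$, and identifies $\Ext^1_R(N, J)$ with $\varprojlim{}^1 \Hom_R(N_t, J)$. The vanishing of $\Ext^1_R(N_{t+1}/N_t, J)$ ensures that the transition maps $\Hom_R(N_{t+1}, J) \to \Hom_R(N_t, J)$ are surjective, so the inverse system $\{\Hom_R(N_t, J)\}$ is Mittag-Leffler and this $\varprojlim{}^1$ vanishes. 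The delicate point is precisely this final passage from bounded to unbounded $\ag$-torsion: the acyclic-resolution argument underpinning (a) does not apply directly to an arbitrary $\ag$-torsion module, and the Mittag-Leffler verification is what bridges the gap.
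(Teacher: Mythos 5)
Your proof is correct, and it differs from the paper's argument in a substantive way. For (a), both you and the paper arrive at the change-of-rings isomorphism $\Ext^i_R(M,J)\cong\Ext^i_{R/\bg}(M,\Hom_R(R/\bg,J))$ and then invoke Baer's criterion, but by dual routes: the paper resolves $J$ by $R$-injectives and shows that $\Hom_R(R/\bg,I^\bullet)$ is an injective $R/\bg$-resolution of $\Hom_R(R/\bg,J)$, while you resolve the test module $M$ by $R/\bg$-free modules and check they are $\Hom_R(-,J)$-acyclic. Both are valid. For (b) the routes genuinely diverge. The paper forms the direct limit over $t$ of the split-exact complexes $\Hom_R(R/\ag^t,I^\bullet)$ and asserts that the limit complex $0\to\gam(J)\to\gam(I^0)\to\cdots$ is again split-exact, so that applying $\Hom_R(N,-)$ preserves exactness. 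You instead first dispatch modules killed by a single $\bg\supseteq\ag^t$ directly from the change-of-rings isomorphism and (a), and then bridge from bounded to unbounded torsion via the telescope exact sequence for $N=\varinjlim N_t$ together with a $\varprojlim{}^1$ computation: the telescope kills $\Ext^i$ for $i\geq 2$ outright, and for $i=1$ the surjectivity of the restriction maps $\Hom_R(N_{t+1},J)\to\Hom_R(N_t,J)$ (forced by $\Ext^1_R(N_{t+1}/N_t,J)=0$, since $N_{t+1}/N_t$ is killed by $\ag$) makes the inverse system Mittag-Leffler and kills $\varprojlim{}^1$. This trade-off buys you something: a direct limit of split-exact complexes need not be split-exact (the contracting homotopies need not be chosen compatibly with the transition maps), so the paper's phrase ``by passing to the direct limit'' is doing more work than it appears; your telescope argument replaces that step with an explicit reduction that makes the passage from $\ag^t$-torsion to $\ag$-torsion visibly a $\varprojlim{}^1$-vanishing question, with no hidden compatibility of splittings needed.
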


\begin{proof} 
	Let $I^{\bullet}$ denote a injective resolution of $J$. 

	The statement in (a) was already in \cite[2.7.2]{SpSa}. Let us recall the proof. We fix an ideal $\bg$ containing some power of $\ag$. 
	For any ideal $\cg$ containing $\bg$ the complex $\Hom_R(R/\cg, I^{\bullet})$
	is  exact in degree $i>0$ by the assumption on $J$. In particular, the complex
	$\Hom_R(R/\bg, I^{\bullet})$ provides an injective
	$R/\bg$-resolution of the $R/\bg$-module $\Hom_R(R/\bg, J)$. But 
	$\Hom_R(R/\cg, I^{\bullet})=\Hom_{R/\bg}(R/\cg, \Hom_R(R/\bg, I^{\bullet}))$
	as is easily seen. It follows that $\Ext^1_{R/\bg}(R/\cg, \Hom_R(R/\bg, J))=0$. Hence $\Hom_R(R/\bg, J)$ is $R/\bg$-injective by Baer's injectivity criterion.
	
	(b) There is a direct system of augmented complexes 
	$$0\to \Hom_R(R/\ag^t, J)\to \Hom_R(R/\ag^t, I^0)\to \Hom_R(R/\ag^t, I^1)\to \cdots.$$
	These complexes are exact by the assumption on $J$. They are even split-exact  by view of (a). By passing to the direct limit it follows that the complex
	$$0 \to \gam(J)\to \gam (I^0) \to \gam (I^1) \to\cdots$$
	also is split-exact and so is the complex 
	$$0 \to \Hom_R(N, \gam(J))\to \Hom_R(N,\gam (I^0)) \to \Hom_R(N,\gam (I^1)) \to\cdots.$$ Because 
	$\Hom_R(N, I^{\bullet}) \cong \Hom_R(N, \gam(I^{\bullet}))$ when $N$ is $\ag$-torsion it follows $\Ext^i_R(N, J)=0$ for all $i\geq 1$.
\end{proof}

For other informations on relatively-$\ag$-injective modules, see  \cite[Chapter 2, Section 7]{SpSa}. For
other examples, see also the interesting paper of Quy and Rohrer  \cite{QR}.

\section{Weakly pro-regular sequences  
and injectivity}

We shall obtain another characterization of weakly pro-regular sequences. 
Let $\xx = x_1,\ldots,x_k$ denote a sequence of elements of a commutative ring $R$. 
For a natural number $n$ let $\xx^n = x_1^n,\ldots,x_k^n$. For natural numbers 
$m \geq n$ there is a natural map of the Koszul homology modules $H_i(\xx^m;R) \to 
H_i(\xx^n;R)$ for all $i \geq 0$. The sequence $\xx$ is called weakly pro-regular 
provided the inverse system $\{H_i(\xx^n;R) \ n\geq 1\}$ is pro-zero for all 
$i \geq 1$, that is, for all $i >0 $ and any $n$ there is a natural number 
$m \geq n$ such that the natural map $H_i(\xx^m;R) \to H_i(\xx^n;R)$ is zero. 
Weakly pro-regular sequences  were introduced in \cite{lip2} and\cite{S} (see also 
\cite[Chapter 7, Section 3]{SpSa}).

\begin{recalls} \label{rec1} 
	Let $\xx=x_1, \ldots, x_k$ denote a sequence of elements in the commutative 
	ring, write $\ag$ for the ideal generated by this sequence and 	
	$\check{C}_{\xx}$ for the \v Cech complex constructed on this sequence. Then 
	for  all $R$-modules (resp. complexes) $M$ there are natural homomorphisms 
	$H^i_{\ag}(M) \to H^i(\check{C}_{\xx}\otimes_R M)$ and it is known that these 
	are isomorphisms when the sequence $\xx$ is weakly pro-regular (see  
	\cite{lip2} or \cite{S} 	or \cite[7.4.1]{SpSa} applied to an injective 
	resolution of $M$). The main reason for this is given by the following result. 

	\vspace*{.2cm}
	\textit{A sequence $\xx=x_1, \ldots, x_k$ of elements in a 
	commutative ring $R$ is weakly pro-regular if and only $H^i(\check{C}_{\xx}\otimes_R I)=0$ for all injective $R$-modules $I$ and all $i>0$} (see \cite[Lemma 7.3.3]{SpSa}).
	\vspace*{.2cm}
	
	Note that a one length sequence $x$ is weakly pro-regular if and only if the ascending sequence of ideals $0:_R x^t$, $t>0$, stabilizes. Note also that any finite sequence of elements in a Noetherian ring is weakly pro-regular. (This may be proved with the Artin-Rees Lemma, see \cite[4.3.3]{St90} or  \cite[A.2.3]{SpSa}.)

	For more informations on the notion of weakly pro-regular sequences, we refer to \cite{S} or \cite[Chapter 7, Section 3]{SpSa}.
\end{recalls}

Let $I$ denote an injective $R$-module. In the preceding section we investigated when $\gam (I)$ is relatively-$\ag$-injective. The more general question to know when $\gam (I)$ is injective has been investigated by Quy and Rohrer (see \cite{QR}). In this interesting paper Quy and Rohrer proved the following proposition.

\begin{proposition} \label{qr} (\rm{see} \cite[Proposition 3.2]{QR})
	Let $\ag$ be an ideal of the commutative ring $R$ generated by the sequence $\xx=x_1, \ldots , x_k$. If for all injective $R$-modules $I$ one has that $\gam(I)$ is again injective, then the sequence $\xx$ is weakly pro-regular,
\end{proposition}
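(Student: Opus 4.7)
The plan is to use the characterisation recalled in \ref{rec1}: weak pro-regularity of $\xx$ is equivalent to the Čech vanishing $H^i(\check{C}_{\xx}\otimes_R I)=0$ for every injective $R$-module $I$ and every $i\geq 1$. So I will try to deduce this vanishing from the hypothesis that $\Gamma_{\mathfrak a}(I)$ is injective for every injective $I$. As a warm-up observation, this hypothesis implies, via \ref{bisrelinj1}, that property $B$ holds, i.e.\ $H^i_{\mathfrak a}(M)=0$ for every $\mathfrak a$-torsion module $M$ and every $i\geq 1$.

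Fix an injective $I$. Since $\Gamma_{\mathfrak a}(I)$ is injective, the short exact sequence $0\to \Gamma_{\mathfrak a}(I)\to I\to N\to 0$ with $N=I/\Gamma_{\mathfrak a}(I)$ splits, so $N$ is a direct summand of $I$ (hence injective), and a routine verification using that $\mathfrak a$ is finitely generated gives $\Gamma_{\mathfrak a}(N)=0$. The localisations $\Gamma_{\mathfrak a}(I)_{x_i}$ all vanish, so $\check{C}_{\xx}\otimes_R\Gamma_{\mathfrak a}(I)$ is concentrated in degree $0$; tensoring the split sequence with the flat complex $\check{C}_{\xx}$ and passing to cohomology yields $H^i(\check{C}_{\xx}\otimes_R I)\cong H^i(\check{C}_{\xx}\otimes_R N)$ for every $i\geq 1$. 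Thus the problem reduces to proving $H^i(\check{C}_{\xx}\otimes_R N)=0$, $i\geq 1$, for every injective $\mathfrak a$-torsion-free $N$.

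This last step is the main obstacle. For $k=1$ with $\mathfrak a=xR$, it is equivalent to $x$-divisibility of $N$: applying $\Hom_R(-,N)$ to $0\to xR\to R\to R/xR\to 0$, using $\Gamma_{xR}(N)=0$ and the injectivity of $N$, one obtains a bijection $N\xrightarrow{\cdot x}(0:_N(0:_R x))$, so $xN=(0:_N(0:_R x))$, and it remains to upgrade this to $xN=N$; extracting this upgrade from the hypothesis is the delicate point. For $k\geq 2$ I would split $\xx=(\xx',x_k)$ and combine the Čech complexes for $\xx'$ and $x_k$ inductively, via a Mayer--Vietoris-type long exact sequence. A cleaner and perhaps more natural route, in Quy--Rohrer's own spirit, is to prove the contrapositive: starting from an $\xx$ which is not weakly pro-regular (for $k=1$, a non-stabilising chain $\{(0:_R x^t)\}$), construct an injective $I$ for which $\Gamma_{\mathfrak a}(I)$ fails to be injective, patterned on the Faith idealisation $R\ltimes E$ used in Example \ref{bisexrelinj}, where the non-stabilisation of the chain is precisely what obstructs the injectivity of $\Gamma_{\mathfrak a}(I)$.
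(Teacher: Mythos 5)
Your reduction to the quotient $N=I/\Gamma_{\mathfrak a}(I)$ is sound and is essentially the same first move as the paper's, but you stop short precisely at the point where the actual work happens, and you do not supply the idea that closes the gap. You correctly note as a ``warm-up'' that the hypothesis gives property $B$ (via \ref{bisrelinj1}), but you then set that fact aside and try to prove the vanishing $H^i(\check C_{\xx}\otimes_R N)=0$ by hand, getting stuck already for $k=1$. Property $B$ is not a warm-up; it is the engine. From the splitting (or just from the long exact local cohomology sequence), property $B$ applied to the $\mathfrak a$-torsion module $\Gamma_{\mathfrak a}(I)$ together with $H^i_{\mathfrak a}(I)=0$ for $i\geq 1$ forces $H^i_{\mathfrak a}(N)=0$ for all $i\geq 0$, i.e.\ $\Edp(\mathfrak a,N)=\infty$; this is exactly the content of Proposition~\ref{relinj-2}(iii) applied to $J=I$. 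The $\Gamma_{\mathfrak a}(N)=0$ you establish only gives $\Edp(\mathfrak a,N)\geq 1$, which is far weaker and is indeed insufficient to conclude $x$-divisibility in your $k=1$ attempt.

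Once one has $\Edp(\mathfrak a,N)=\infty$, the missing ingredient is the Ext-depth sensitivity of the Koszul complex (see \ref{edp} and the references there): it yields $H^i(K^{\bullet}(\xx^t)\otimes_R N)=0$ for all $i\geq 0$ and all $t$, and passing to the direct limit gives that $\check C_{\xx}\otimes_R N$ is exact. Combined with your observation that $\check C_{\xx}\otimes_R\Gamma_{\mathfrak a}(I)\cong\Gamma_{\mathfrak a}(I)$ is concentrated in degree $0$, this gives $H^i(\check C_{\xx}\otimes_R I)=0$ for $i\geq 1$, hence weak pro-regularity by \ref{rec1}. This is precisely the implication (i)$\Rightarrow$(ii) in Theorem~\ref{weak}, of which Proposition~\ref{qr} is a special case (injective $\Rightarrow$ relatively-$\mathfrak a$-injective). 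Your suggested alternatives --- a Mayer--Vietoris induction on $k$, or a contrapositive construction of a counterexample in the style of the idealisation $R\ltimes E$ --- are not developed and would require substantially more work than the direct depth-sensitivity argument, so as written the proposal has a genuine gap at its central step.
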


We note that \ref{qr} is a particular case of the following more precise 
theorem, which also provide us with a new characterization of weakly pro-regular sequences.

\begin{theorem} \label{weak} 
	Let $\ag$ be a finitely generated ideal of a commutative ring $R$ generated by the sequence $\xx=x_1, \ldots, x_k$. Then the following are equivalent:
	\begin{itemize}
	\item[(i)] $H^i_{\ag}(M)=0$ for all $i>0$ and every $\ag$-torsion 
	$R$-module $M$.
	\item[(ii)]  $\xx$ is a weakly pro-regular sequence.
	\item[(iii)] $\gam(I)$  is  relatively-$\ag$-injective for every injective $R$-modules $I$.
\end{itemize}
\end{theorem}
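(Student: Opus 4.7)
The equivalence (i)$\Leftrightarrow$(iii) is Proposition \ref{bisrelinj1}, so the plan reduces to establishing the equivalence with (ii) through the Čech characterization of weak pro-regularity recalled in \ref{rec1}.

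For (ii)$\Rightarrow$(i) the argument is direct: when $\xx$ is weakly pro-regular, the natural maps $H^i_{\ag}(M)\to H^i(\Cech\otimes_R M)$ are isomorphisms for every $R$-module $M$. If $M$ is $\ag$-torsion, every element is killed by a power of each $x_j\in\ag$, so $M_{x_j}=0$ for each $j$, and similarly for every non-empty product of the $x_j$'s. Thus $\Cech\otimes_R M$ reduces to $M$ in cohomological degree $0$, which gives $H^i_{\ag}(M)=0$ for $i>0$.

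For (iii)$\Rightarrow$(ii) the goal, by \ref{rec1}, is to prove $H^i(\Cech\otimes_R I)=0$ for every injective $I$ and every $i>0$. Set $N:=I/\gam(I)$. By (iii) and Proposition \ref{relinj-2}, $N$ is relatively-$\ag$-injective with $\Edp(\ag,N)=\infty$; in particular $\gam(N)=0$. Since $\Cech$ is a complex of flat modules, the sequence $0\to\gam(I)\to I\to N\to 0$ yields an exact sequence of Čech complexes; as in the previous paragraph the complex $\Cech\otimes_R\gam(I)$ is concentrated in degree $0$, so the long exact cohomology sequence reduces the task to showing $H^i(\Cech\otimes_R N)=0$ for $i>0$.

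This last vanishing is the main step, and I would establish it as follows. Proposition \ref{prinj}(b) combined with $\gam(N)=0$ (which forces $\Hom_R(M,N)=0$ for any $\ag$-torsion $M$, since the image of such a map would lie in $\gam(N)$) gives $\Ext^p_R(M,N)=0$ for every $\ag$-torsion $R$-module $M$ and every $p\geq 0$. For each $n\geq 1$ the Koszul homology modules $H_q(K_\bullet(\xx^n;R))$ are annihilated by $(x_1^n,\ldots,x_k^n)$, hence by a power of $\ag$, and are therefore $\ag$-torsion. The hypercohomology spectral sequence with $E_2^{p,q}=\Ext^p_R(H_q(K_\bullet(\xx^n;R)),N)$ converging to $H^*(\Hom_R(K_\bullet(\xx^n;R),N))=H^*(K^\bullet(\xx^n;N))$ therefore has all its $E_2$-terms zero, so $H^i(K^\bullet(\xx^n;N))=0$ for every $i$ and $n$. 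Passing to the direct limit and using the quasi-isomorphism $\Cech\otimes_R N\simeq\varinjlim_n K^\bullet(\xx^n;N)$ (the stable Koszul presentation of the Čech complex) completes the argument. The main obstacle I anticipate is exactly this passage from Ext-vanishing against $\ag$-torsion test modules to Čech-cohomology vanishing for $N$; if the hypercohomology spectral sequence is considered heavy, a viable alternative is an induction on $k$, whose base case uses that a relatively-$(x_1)$-injective module $N$ with no $(x_1)$-torsion satisfies $N\cong N_{x_1}$, and whose inductive step exploits the tensor decomposition $\Cech(\xx)\cong\Cech(x_1,\ldots,x_{k-1})\otimes_R\Cech(x_k)$.
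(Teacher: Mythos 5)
Your proof is correct, and the overall strategy -- (i)$\Leftrightarrow$(iii) via Proposition \ref{bisrelinj1}, (ii)$\Rightarrow$(i) by observing $\Cech\otimes_R M \cong M$ for $\ag$-torsion $M$, and then reducing (iii)$\Rightarrow$(ii) to showing $H^i(\Cech\otimes_R N)=0$ for $N=I/\gam(I)$ -- matches the paper. The genuine difference lies in how you establish the key vanishing of the Koszul cohomology $H^i(K^\bullet(\xx^n;N))$ for all $i$. The paper obtains this in one stroke from the Ext-depth sensitivity of the Koszul complex (citing Strooker's Theorem 6.1.6), starting from $\Edp(\ag,N)=\infty$. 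You instead first upgrade to the stronger statement that $\Ext^p_R(M,N)=0$ for \emph{all} $\ag$-torsion $M$ and all $p\geq0$ (using Proposition \ref{prinj}(b), which the authors explicitly say they do not use, together with $\gam(N)=0$), then observe that Koszul homology modules are $\ag$-torsion and feed this into the hyper-Ext spectral sequence $E_2^{p,q}=\Ext^p_R(H_q(K_\bullet(\xx^n;R)),N)\Rightarrow H^{p+q}(\Hom_R(K_\bullet(\xx^n;R),N))$. This is a valid argument: the Koszul complex is a bounded complex of finitely generated free modules, so the spectral sequence converges, and $\Hom_R(K_\bullet(\xx^n;R),N)\cong K^\bullet(\xx^n)\otimes_R N$ by self-duality of Koszul. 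Your route trades the classical depth-sensitivity theorem for Proposition \ref{prinj}(b) plus hypercohomology machinery -- keeping the dependence internal to the paper at the cost of invoking a Cartan--Eilenberg-type spectral sequence; the paper's choice is lighter but imports an external theorem. Your alternative by induction on $k$ is also a reasonable fallback.
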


\begin{proof}
	(i)$\Rightarrow$(ii): Let $I$ be any injective $R$-modules and consider the 
	short exact sequence 
	$$
	0\to \gam(I)\to I \to I/\gam(I) \to 0.
	$$
	 By view of the condition (i) and by Proposition 
	\ref{bisrelinj1} we have that $\gam(I)$ is relatively-$\ag$-injective. By view of Proposition \ref{relinj-2}
	we then have that $\Edp(\ag, I/\gam(I))=\infty$.
	
	Now let $K^{\bullet}(\xx^t)$ denote the ascending Koszul complex
	constructed on the sequence $\xx^t=x_1^t, \ldots , x_k^t$ (see \cite{St90} 
	or \cite[Chapter 5 Section 2]{SpSa}). By the Ext-depth sensitivity of the Koszul complex we 
	also have that $H^i(K^{\bullet}(\xx^t) \otimes_R I/\gam(I))=0$ for all $i\geq 0$  
	(see \cite[Theorem 6.1.6]{St90} or \cite[Chapter 5]{SpSa}. 
	It follows that the complex $\check{C}_{\xx}\otimes_R I/\gam(I)$ is exact since 
	$H^i(\check{C}_{\xx}\otimes_R I/\gam(I))\cong \varinjlim H^i(K^{\bullet}(\xx^t) 
	\otimes_R I/\gam(I))$. But the complex $\check{C}_{\xx}$ is a complex of flat  $R$-modules and induces a short exact sequence of complexes
	$$
	0\to \check{C}_{\xx}\otimes_R\gam(I)\to \check{C}_{\xx}\otimes_R I \to \check{C}_{\xx}\otimes_R I/\gam(I)\to 0.
	$$
	Because $\check{C}_{\xx}\otimes_R\gam (I)\cong \gam (I)$ as is easily seen we now obtain that $H^i(\check{C}_{\xx}\otimes_R I)=0$ for all $i>0$. Hence   the sequence $\xx$ is weakly pro-regular by view of the recalls in \ref{rec1}

	(ii)$\Rightarrow$(i): This is rather obvious. Let $M$ be any $\ag$-torsion 
	modules. Then $\check{C}_{\xx}\otimes_R M\cong M$ as is easily seen and the 
	conclusion follows since $H^i_{\ag}(M) \cong H^i(\check{C}_{\xx}\otimes_R M)$ 
	 when the sequence $\xx$ is weakly pro-regular (see \ref{rec1}).

	(i)$\Leftrightarrow$(iii): This was shown in \ref{bisrelinj1}.
\end{proof}

\begin{corollary} \label{corweak} 
	Let $\ag$ be an ideal of the  commutative ring $R$ generated by the sequence $\xx=x_1, \ldots , x_k$. The following conditions are equivalent. 
	\begin{itemize}
	\item[(i)] $J/\gam(J)$  is  relatively-$\ag$-injective for every 
	relatively-$\ag$-injective $R$-module $J$.
	\item[(ii)] $\gam(J)$  is  relatively-$\ag$-injective for every relatively-$\ag$-injective $R$-module $J$.
	\item[(iii)] $\xx$ is a weakly pro-regular sequence.
	\end{itemize} 
\end{corollary}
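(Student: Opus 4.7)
The proof should be almost mechanical given what has already been established, since all three conditions have been ``prepared'' by earlier results. My plan is to split the equivalence into (i)$\Leftrightarrow$(ii) on the one hand and (ii)$\Leftrightarrow$(iii) on the other, and to invoke \ref{relinj-2}, \ref{bisrelinj1}, and \ref{weak} respectively.

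First, for (i)$\Leftrightarrow$(ii), I would simply apply Proposition \ref{relinj-2} pointwise. Fix an arbitrary relatively-$\ag$-injective $R$-module $J$; that proposition shows that the conditions ``$\gam(J)$ is relatively-$\ag$-injective'' and ``$J/\gam(J)$ is relatively-$\ag$-injective'' are equivalent (via the intermediate condition $\Edp(\ag, J/\gam(J))=\infty$). Quantifying universally over the class of relatively-$\ag$-injective $R$-modules gives the desired equivalence between (i) and (ii). No finite generation of $\ag$ is used here.

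Next, for (ii)$\Leftrightarrow$(iii), condition (ii) here is literally condition (ii) of Proposition \ref{bisrelinj1}, so by that proposition it is equivalent to
\[
H^i_{\ag}(M)=0 \text{ for all } i>0 \text{ and every } \ag\text{-torsion } R\text{-module } M,
\]
which is condition (i) of Theorem \ref{weak}. Since $\ag$ is assumed here to be finitely generated by $\xx$, Theorem \ref{weak} then identifies this condition with the weak pro-regularity of $\xx$, i.e.\ condition (iii) of the corollary.

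There is no real obstacle: the work was done in \ref{relinj-2}, \ref{bisrelinj1}, and \ref{weak}. The only thing to be careful about is that \ref{relinj-2} takes a single module $J$ as input while (i) and (ii) of the corollary are universal statements, but this quantifier swap is trivial since \ref{relinj-2} is a ``for each $J$'' equivalence rather than a ``there exists $J$'' statement.
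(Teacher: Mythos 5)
Your proof is correct and follows exactly the paper's own argument: (i)$\Leftrightarrow$(ii) via Proposition \ref{relinj-2} applied to each $J$, and (ii)$\Leftrightarrow$(iii) by chaining Proposition \ref{bisrelinj1} with Theorem \ref{weak}. Your remark about the universal quantifier is a nice touch but, as you note, harmless.
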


\begin{proof} 
	The equivalence (i)$\Leftrightarrow$(ii) follows by Proposition 
	\ref{relinj-2}. The equivalence (ii)$\Leftrightarrow$(iii) follows 
	by Theorem \ref{weak}  together with Proposition \ref{bisrelinj1}.
\end{proof}

\section{Weakly pro-regular sequences and Mayer-Vietoris sequences}

Local cohomology is a matter of adic topology, Mayer-Vietoris sequence concerns 
local cohomolody with respect to two distinct ideals. Note that the local homology 
with respect to an ideal $\ag$ only depends on the topological equivalence class of 
$\ag$.  We  say that two ideals $\mathfrak a$ and $\mathfrak a'$ are
topologically equivalent if they define the same adic topology, that is if each 
power of $\ag$ contains a power of $\mathfrak a'$ and vice versa. 
More generally let $\ag$ be an ideal of a commutative ring $R$. We say that a 
set of ideals  $\{ \cg_n \mid  n\in \mathbb N \}$ defines the $\ag$-adic topology 
if these $\cg_n$ form a basis of open neighbourhood of the $\ag$-adic topology, 
that is if each power of $\ag$ contains one the $\cg_n$ and if each $\cg_n$ 
contains a power of $\ag$. 
 
For example, let $\mathfrak{a}, \mathfrak{b} \subset R$ be two ideals of 
the commutative ring $R$. Then  the set $\{\mathfrak{a}^n + \mathfrak{b}^n 
\mid n\geq 1\}$ defines the $(\mathfrak{a}+\mathfrak{b})$-adic topology:  
for each integer $n$ we have the following containment relations 
$(\mathfrak{a}+\mathfrak{b})^{2n}	\subseteq \mathfrak{a}^n + \mathfrak{b}^n 
\subseteq (\mathfrak{a}+\mathfrak{b})^n.$
	
Suppose that the ring $R$ is Noetherian and let $\ag, \bg\subset R$ two ideals. 
Then the ideals $\ag \cap \bg$ and $\ag\cdot \bg$ define the same adic-topology 
(that is because they are finitely generated with the same radical). Recall also 
that the set of ideals 
$\{\mathfrak{a}^n \cap \mathfrak{b}^n \mid n\geq 1\}$ defines 
the $\mathfrak{a} \cap \mathfrak{b}$-adic topology, this follows by the Artin-Rees Lemma (see \cite{BS}). 
We shall see that these facts also hold for a commutative ring under some weak pro-regularity conditions. To this end we need Mayer-Vietoris type results.
 
The following obvious lemma will be useful, its r\^ole in Mayer-Vietoris sequences is central.	

\begin{lemma} \label{sup-1} 
	Let $R$ be a commutative ring. Let $M$ denote an $R$-module with $M_1,M_2 \subset M$ two sub-modules. Then  the short exact sequence
	\[
	0 \to  M\to M\oplus M \to M \to 0
	\]
	defined by $m \mapsto (m, m)$ and $(m', m'') \mapsto m'-m''$, $m, m', m'' \in M$, induces short exact sequences  
	\begin{gather*} 
		0\to M_1\cap M_2 \to M_1\oplus M_2 \to M_1 +M_2 \to 0 \text{ and }\\
		0 \to M/(M_1 \cap M_2) \stackrel{i}{\to} M/M_1 \oplus M/M_2 \stackrel{p}{\to} M/(M_1+M_2) \to 0
	\end{gather*}
\end{lemma}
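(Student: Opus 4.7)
The plan is to verify the first induced sequence by direct computation and then deduce the second by a single application of the snake lemma, using that the three vertical inclusions into the given short exact sequence $0 \to M \to M \oplus M \to M \to 0$ have trivial kernel.

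First I would check exactness of
$$0 \to M_1\cap M_2 \xrightarrow{\Delta} M_1 \oplus M_2 \xrightarrow{\delta} M_1+M_2 \to 0,$$
where $\Delta(m)=(m,m)$ and $\delta(m_1,m_2)=m_1-m_2$. Injectivity of $\Delta$ is immediate from the injectivity of the diagonal $M \to M \oplus M$. Surjectivity of $\delta$ is clear since an arbitrary element $m_1+m_2 \in M_1+M_2$ is the image of $(m_1,-m_2)$. Finally, if $\delta(m_1,m_2)=0$ then $m_1=m_2$, which forces this common element to lie in $M_1 \cap M_2$, showing $\ker\delta = \operatorname{im}\Delta$.

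For the second sequence I would assemble the commutative diagram with exact rows
$$
\begin{array}{ccccccccc}
0 & \to & M_1\cap M_2 & \to & M_1\oplus M_2 & \to & M_1+M_2 & \to & 0 \\
& & \downarrow & & \downarrow & & \downarrow & & \\
0 & \to & M & \to & M\oplus M & \to & M & \to & 0
\end{array}
$$
in which the three vertical maps are the canonical inclusions. All three vertical maps are injective, so their kernels vanish, and their cokernels are $M/(M_1\cap M_2)$, $M/M_1 \oplus M/M_2$, and $M/(M_1+M_2)$ respectively. The snake lemma then yields an exact sequence
$$0 \to M/(M_1\cap M_2) \xrightarrow{i} M/M_1 \oplus M/M_2 \xrightarrow{p} M/(M_1+M_2) \to 0,$$
where $i$ and $p$ are the maps induced by the diagonal and the difference on the middle row, i.e.\ exactly the maps described in the statement.

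There is no real obstacle here; the only point requiring a moment's attention is checking that the induced maps on cokernels agree with the stated $i$ and $p$, which is immediate from tracing the representatives through the quotients. Alternatively one could verify the second sequence directly in the same style as the first (well-definedness of $i$ and $p$, injectivity of $i$, surjectivity of $p$, and the middle exactness coming from writing $m'-m'' = a+b$ with $a \in M_1$, $b \in M_2$ and setting $m=m'-a=m''+b$), but the snake lemma approach avoids any case-by-case verification once the first sequence is in hand.
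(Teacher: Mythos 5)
Your proof is correct. The paper itself offers no argument — it introduces the statement as "the following obvious lemma" and leaves the verification to the reader — so there is no paper proof to compare against; your direct check of the first sequence followed by the snake lemma applied to the vertical inclusions (all injective, hence vanishing kernels and the displayed cokernels) is the standard and natural way to fill in what the authors took as evident.
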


Here are first results for the torsion of injective modules.

\begin{proposition} \label{presup-2}	
	Let $\mathfrak{a}, \mathfrak{b} \subset R$ two ideals of the commutative ring $R$ and let $M$ be any $R$-module. 
	\begin{itemize}
		\item[(a)] We have 
		$\Gamma_{\mathfrak{a}+\mathfrak{b}}(M) = \Gamma_{\mathfrak{a}}(M) \cap \Gamma_{\mathfrak{b}}(M)$ and  a short exact sequence
		$$
		0 \to \Gamma_{\mathfrak{a}+\mathfrak{b}}(M) \to \Gamma_{\mathfrak{a}}(M) 
		\oplus \Gamma_{\mathfrak{b}}(M) \to \Gamma_{\mathfrak{a}}(M) + \Gamma_{\mathfrak{b}}(M) \to 0.
		$$
		\item[(b)] Moreover, if $M=I$ is injective, then 
		$$
		\Gamma_{\mathfrak{a}}(I) + \Gamma_{\mathfrak{b}}(I) \cong 	\varinjlim \Hom_R(R/\mathfrak{a}^n\cap \mathfrak{b}^n,I).
		$$
	\end{itemize}
\end{proposition}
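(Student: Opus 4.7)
For part (a), the plan is first to establish the set-theoretic identity $\Gamma_{\ag+\bg}(M) = \Gamma_{\ag}(M) \cap \Gamma_{\bg}(M)$ by elementary inclusions of ideal powers. Since $\ag^n,\,\bg^n \subseteq (\ag+\bg)^n$, any element killed by some power of $\ag+\bg$ automatically lies in both $\Gamma_{\ag}(M)$ and $\Gamma_{\bg}(M)$. Conversely, if $\ag^s m = 0$ and $\bg^t m = 0$, then a pigeonhole argument on monomials of length $s+t-1$ in $\ag\cup \bg$ (at least $s$ factors from $\ag$ or at least $t$ from $\bg$) gives $(\ag+\bg)^{s+t-1} m = 0$. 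Once this equality is in place, the announced short exact sequence follows at once from Lemma \ref{sup-1} applied to the submodules $M_1 = \Gamma_{\ag}(M)$ and $M_2 = \Gamma_{\bg}(M)$ of $M$.

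For part (b), the plan is to invoke the second short exact sequence of Lemma \ref{sup-1} with the ring $R$ in place of $M$ and with the submodules $\ag^n$ and $\bg^n$, producing
$$
0 \to R/(\ag^n \cap \bg^n) \to R/\ag^n \oplus R/\bg^n \to R/(\ag^n + \bg^n) \to 0.
$$
Applying $\Hom_R(-, I)$, which is exact because $I$ is injective, and using the canonical identification $\Hom_R(R/\cg, I) = (0 :_I \cg)$, one obtains a short exact sequence whose right-hand surjectivity reads, for every $n \geq 1$,
$$
(0 :_I \ag^n) + (0 :_I \bg^n) = (0 :_I (\ag^n \cap \bg^n)) = \Hom_R(R/(\ag^n \cap \bg^n), I).
$$
This is the key step and the only place where the injectivity of $I$ intervenes: it guarantees that every homomorphism $R/(\ag^n \cap \bg^n) \to I$ decomposes as a sum of one factoring through $R/\ag^n$ and one factoring through $R/\bg^n$.

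To finish, one passes to the direct limit over $n$, with transition maps induced by the inclusions $\ag^{m} \cap \bg^{m} \subseteq \ag^n \cap \bg^n$ for $m \geq n$. Since filtered direct limits commute with finite sums of submodules, the left-hand side of the displayed equality yields
$$
\varinjlim \bigl[(0:_I \ag^n) + (0:_I \bg^n)\bigr] = \Gamma_{\ag}(I) + \Gamma_{\bg}(I),
$$
giving the claimed isomorphism $\Gamma_{\ag}(I) + \Gamma_{\bg}(I) \cong \varinjlim \Hom_R(R/(\ag^n \cap \bg^n), I)$. There is no genuine obstacle in the argument; its content is simply the observation that Lemma \ref{sup-1}, applied inside $R$ and dualized through the injective $I$, converts the intersection $\ag^n \cap \bg^n$ on the ring side into the sum of annihilators on the module side, and then one lets $n \to \infty$.
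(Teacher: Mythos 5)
Your proof is correct and follows essentially the same route as the paper: the set-theoretic equality and Lemma \ref{sup-1} give part (a), and part (b) is obtained by applying $\Hom_R(-,I)$ to the second sequence of Lemma \ref{sup-1} for $R$ with submodules $\ag^n,\bg^n$, using injectivity of $I$ for exactness, and then passing to the direct limit. The only cosmetic difference is that the paper compares two short exact sequences at the limit level, whereas you identify $(0:_I \ag^n) + (0:_I \bg^n) = (0:_I(\ag^n\cap\bg^n))$ already at each finite $n$ and then take the union; these are equivalent.
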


\begin{proof}
	(a) The inclusion $\Gamma_{\mathfrak{a}+\mathfrak{b}}(M) \subseteq \Gamma_{\mathfrak{a}}(M) \cap \Gamma_{\mathfrak{b}}(M)$ is trivial. 	
	Let $m\in\Gamma_{\mathfrak{a}}(M) \cap \Gamma_{\mathfrak{b}}(M)$ and 
	therefore  $\ag^nm=0=\bg^nm$ for some $n\geq 1$. Then we have 
	$(\ag^n+\bg^n)m=0$ and 
	$(\ag+\bg)^{2n}m=0$, that is  $m\in \Gamma_{\mathfrak{a}+\mathfrak{b}}(M)$.  The short exact sequence follows  (see \ref{sup-1} applied to the sub-modules $\Gamma_{\ag}(M)$ and $\Gamma_{\bg}(M)$ of $M$). \\
	(b) By view of \ref{sup-1} there is the short exact sequences 
	\[
	0 \to \Hom_R(R/\mathfrak{a}^n+\mathfrak{b}^n,I) \to 
	\Hom_R(R/\mathfrak{a}^n,I) \oplus \Hom_R(R/\mathfrak{b}^n,I) 
	\to \Hom_R(R/\mathfrak{a}^n\cap \mathfrak{b}^n,I) \to 0.
	\]
	for all $n \geq 1$. With the natural homomorphisms these form a short exact sequence of direct systems. By passing to the direct limit there is a short exact sequence 
	\[
	0 \to \Gamma_{\mathfrak{a}+\mathfrak{b}}(I) \to \Gamma_{\mathfrak{a}}(I) 
	\oplus \Gamma_{\mathfrak{b}}(I)  \to 
	\varinjlim \Hom_R(R/\mathfrak{a}^n\cap \mathfrak{b}^n,I) \to 0.
	\]
	The last claim follows by  comparing this   short exact sequence with the  one in (a).	
\end{proof}

Then recall that there is also a Mayer-Vietoris long exact sequence for \v Cech cohomology.
		
\begin{theorem}\label{mv-4} (see \cite[Theorem 9.4.3]{SpSa} or \cite{Tc})
	Let $\xx=x_1, \ldots , x_k$ and $\yy=y_1, \ldots , y_l$ be two sequences
	in a commutative ring $R$. We form the sequence $\xx,\yy$ and denote by $\zz$ the sequence formed by the $z_{ij}=x_iy_j$ in any order.
	Let $X$ denote an $R$-complex. Then there is a long exact sequence
	\[
		\ldots \to H^i(\check C_{\xx,\yy}\otimes_RX) \to
		H^i(\Cech\otimes_R X)\oplus H^i(\check C_{\yy}\otimes_RX) \\
		\to H^i(\check C_{\zz}\otimes_RX) \to
		H^{i+1}(\check C_{\xx,\yy}\otimes_RX) \to \ldots
	\]
\end{theorem}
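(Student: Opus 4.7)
Plan. I would establish a distinguished triangle of $R$-complexes
$$\check C_{\xx,\yy} \longrightarrow \check C_\xx \oplus \check C_\yy \longrightarrow \check C_\zz \xrightarrow{+1},$$
and then tensor it with $X$ to obtain the desired long exact sequence. Each term is a bounded complex of localizations of $R$ (hence of flat $R$-modules), so the functor $-\otimes_R X$ preserves the triangle, and passing to cohomology gives precisely the Mayer--Vietoris sequence claimed.

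For the first map of the triangle I would use the identification $\check C_{\xx,\yy} = \check C_\xx \otimes_R \check C_\yy$ (concatenation of the generating sequences corresponds to tensoring the \v{C}ech complexes). Tensoring $\check C_\xx$ with the ``augmentation'' chain map $\check C_\yy \to R[0]$ --- the map which is the identity in degree $0$ and zero in every other degree, a genuine chain map because $R[0]$ is concentrated in a single degree --- yields $\check C_{\xx,\yy} \to \check C_\xx$, and symmetrically one obtains $\check C_{\xx,\yy} \to \check C_\yy$; pair them. For the second map I would take the difference of chain maps $\check C_\xx \to \check C_\zz$ and $\check C_\yy \to \check C_\zz$; the first of these is built as the tensor product over $i$ of the chain maps $\check C_{x_i} \to \bigotimes_j \check C_{x_iy_j}$ which are the identity on $R$ in degree $0$, the diagonal further-localization $R_{x_i} \to \bigoplus_j R_{x_iy_j}$ in degree $1$, and zero elsewhere, using the identification $\bigotimes_{(i,j)} \check C_{x_iy_j} = \check C_\zz$.

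The main technical obstacle will be to verify that the composition of the two constructed maps is null-homotopic and that the mapping cone of the first is quasi-isomorphic to $\check C_\zz$. For $k,l\ge 2$ this is genuinely non-trivial: $\check C_\zz$ has more direct summands in positive degrees than naturally appear in the cone construction, the extras being indexed by non-rectangular subsets of the product $\{1,\ldots,k\}\times\{1,\ldots,l\}$. One has to show that these extra summands assemble into an acyclic subcomplex; this is essentially a combinatorial statement about the nerve of the cover by the basic opens $D(x_iy_j)$, provable either by filtering subsets of $\{1,\ldots,k\}\times\{1,\ldots,l\}$ by the size of their projections and collapsing the associated spectral sequence, or by exhibiting an explicit contraction. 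Once this acyclicity is in hand, the triangle is established and the theorem follows by taking cohomology after tensoring with $X$.
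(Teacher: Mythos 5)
Your approach is genuinely different from the paper's. The authors reduce to the Noetherian case: they base-change along $\mathbb{Z}[X_1,\ldots,X_k,Y_1,\ldots,Y_l] \to R$ ($X_i\mapsto x_i$, $Y_j\mapsto y_j$), apply the classical Mayer--Vietoris long exact sequence for local cohomology over the Noetherian polynomial ring, and transfer it to \v{C}ech cohomology via the identification $H^i_{\cg}(\cdot)\cong H^i(\check C\otimes\cdot)$ valid for weakly pro-regular sequences (automatic there, since the ring is Noetherian); the resulting distinguished triangle of \v{C}ech complexes over $\mathbb{Z}[\underline X,\underline Y]$ then descends to $R$ because all terms are bounded complexes of flat modules. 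You instead propose to build the triangle $\check C_{\xx,\yy} \to \Cech \oplus \check C_{\yy} \to \check C_{\zz} \to \check C_{\xx,\yy}[1]$ directly over $R$; this is essentially the route of T\^ete~\cite{Tc}, which the paper also cites. Your construction of the two chain maps is correct (the identifications $\check C_{\xx,\yy}=\Cech\otimes_R\check C_{\yy}$ and $\check C_{\zz}=\bigotimes_{i,j}\check C_{x_iy_j}$, the augmentation $\check C_{\yy}\to R[0]$, and the diagonal localization maps all work as stated), and your flatness remark that lets you tensor a quasi-isomorphism with $X$ is also correct. The payoff of your route, if completed, is a self-contained argument that never leaves $R$.

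That said, you explicitly defer the heart of the matter: verifying that the composite is null-homotopic and that the induced map from the mapping cone of $\check C_{\xx,\yy}\to\Cech\oplus\check C_{\yy}$ to $\check C_{\zz}$ is a quasi-isomorphism. Your diagnosis that this is genuinely nontrivial is right: in degree $n$ the cone has $\check C_{\xx,\yy}^{\,n+1}\oplus\Cech^{\,n}\oplus\check C_{\yy}^{\,n}$, built from localizations at products of the $x_i$ and $y_j$ taken separately, while $\check C_{\zz}^{\,n}$ is built from localizations at products of the $z_{ij}=x_iy_j$; for $k,l\geq 2$ these are not termwise isomorphic and the discrepancy must be shown to be homologically invisible. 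Neither the explicit contraction nor the spectral-sequence argument you gesture at is carried out, so the proposal reduces the theorem to this unproved combinatorial-homological lemma rather than proving it. The lemma is true --- it follows for instance from the paper's base-change argument, or is established in~\cite{Tc} --- but supplying it is precisely the substance of the direct approach, so as written your proposal has a real gap at exactly the point you flag.
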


\begin{proof} 
	The proof makes use of the change of rings homomorphism 
	$$
	\mathbb Z[X_1, \ldots , X_k, Y_1, \ldots Y_l] \to R:  
	X_i\mapsto x_i, Y_i \mapsto y_i ,
	$$ 
	where $X_1, \ldots , X_k, Y_1, \ldots Y_l$ are indeterminates, and the  Mayer-Vietoris long exact sequence for Noetherian rings (see e.g. \cite[9.4.2]{SpSa}) together with the recalls in \ref{rec1}. For more  details see \cite[9.4.3]{SpSa}. 
\end{proof}	

We now obtain a Mayer-Vietoris  long exact sequence in a rather large generality.

\begin{theorem} \label{mv-5} 
	With the notations  of \ref{mv-4} put $\ag=\xx R$ and $\bg=\yy R$, 
	so that $\mathfrak{a} \cdot \mathfrak{b} =\zz R$. Suppose  that the 
	three sequences $\xx, \yy, \xx \yy$ are weakly pro-regular. 
	\begin{itemize}
	\item[(a)] The sequence $\zz$ is also weakly pro-regular.
	\item[(b)] There is a long exact sequence
	$$
	\ldots \to H^i_{\mathfrak{a}+\mathfrak{b}}(X)
	\to H^i_{\mathfrak{a}}(X)\oplus H^i_{\mathfrak{b}}(X) \to
	H^i_{\mathfrak{a}\cdot \mathfrak{b}}(X) \to  H^{i+1}_{\mathfrak{a}+\mathfrak{b}}(X) \to \ldots
	$$
	for any $R$-complex $X$.
\end{itemize}
\end{theorem}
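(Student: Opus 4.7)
The plan is to deduce both assertions from the \v{C}ech cohomology Mayer--Vietoris sequence of \ref{mv-4}, combined with the characterization of weak pro-regularity recalled in \ref{rec1}.

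For part~(a), I would apply \ref{mv-4} to $X = I$ for an arbitrary injective $R$-module $I$. By hypothesis the three sequences $\xx$, $\yy$ and the concatenation $\xx,\yy$ are weakly pro-regular, so by \ref{rec1} the \v{C}ech cohomologies $H^i(\check C_{\xx}\otimes_R I)$, $H^i(\check C_{\yy}\otimes_R I)$ and $H^i(\check C_{\xx,\yy}\otimes_R I)$ all vanish for every $i \geq 1$. Extracting the segment of the Mayer--Vietoris long exact sequence of \ref{mv-4} around $H^i(\check C_{\zz}\otimes_R I)$, the two flanking terms $H^i(\check C_{\xx}\otimes_R I) \oplus H^i(\check C_{\yy}\otimes_R I)$ and $H^{i+1}(\check C_{\xx,\yy}\otimes_R I)$ vanish for every $i \geq 1$, which forces $H^i(\check C_{\zz}\otimes_R I) = 0$ for every $i \geq 1$ and every injective $R$-module $I$. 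A second application of \ref{rec1} then shows that $\zz$ is weakly pro-regular.

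For part~(b), once (a) is granted, all four sequences $\xx$, $\yy$, $\xx,\yy$ and $\zz$ are weakly pro-regular. By the recalls in \ref{rec1}, applied to an injective resolution of the complex $X$, there are natural isomorphisms $H^i_{\ag}(X) \cong H^i(\check C_{\xx}\otimes_R X)$, $H^i_{\bg}(X) \cong H^i(\check C_{\yy}\otimes_R X)$, $H^i_{\ag+\bg}(X) \cong H^i(\check C_{\xx,\yy}\otimes_R X)$ and $H^i_{\ag\cdot\bg}(X) \cong H^i(\check C_{\zz}\otimes_R X)$. Substituting these identifications into the \v{C}ech Mayer--Vietoris long exact sequence of \ref{mv-4} yields the claimed long exact sequence in local cohomology.

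The main point to verify carefully is the naturality of these identifications with respect to the connecting homomorphisms of the Mayer--Vietoris sequence: one needs that the quasi-isomorphism $\Rgama(X) \simeq \check C_{\xx}\otimes_R X$, and its analogues for $\bg$, $\ag+\bg$ and $\ag\cdot\bg$, are compatible with the morphisms induced by the inclusions $\xx \subset (\xx,\yy)$, $\yy \subset (\xx,\yy)$ and with the map coming from writing $z_{ij}=x_iy_j$. Since the \v{C}ech complex is functorial in the generating sequence, this compatibility is essentially formal, and I expect this bookkeeping of naturality to be the only technical step rather than a genuine conceptual obstacle.
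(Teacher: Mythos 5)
Your proposal is correct and follows essentially the same route as the paper: part (a) by specializing the \v{C}ech Mayer--Vietoris sequence of \ref{mv-4} to injective modules and invoking the vanishing criterion of \ref{rec1}, and part (b) by substituting the identifications $H^i_{\ag}(X)\cong H^i(\check C_{\xx}\otimes_R X)$ (and analogues) into that same sequence once all four generating sequences are known to be weakly pro-regular. The naturality point you flag is indeed the only technical detail left implicit in the paper, and your assessment that it is formal bookkeeping is accurate.
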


\begin{proof} 
	The first statement was already in \cite[Corollary 9.4.4]{SpSa}. It follows by the above theorem together with the characterization of weakly pro regular sequences recalled in \ref{rec1}.
 	Then the  second follows by the first together with Theorem \ref{mv-4}. Note with  \ref{rec1} that the \v Cech cohomology with respect to a weakly pro-regular sequence coincides with the local homology with respect to  the ideal generated by this sequence.
\end{proof}

A second Mayer-Vietoris type result concerns the torsion of injective modules. 

\begin{corollary} \label{mv-6} 
	Let $\ag$ and $\bg$ two finitely generated ideals of the commutative ring $R$, 
	generated respectively by the sequences $\xx=x_1, \ldots , x_k$ and $\yy=y_1, 
	\ldots , y_l$. Assume that the three sequences $\xx, \yy, \xx \yy$ are weakly 
	pro-regular. For any injective $R$-module $I$ we then have a short exact 
	sequence
	\[
	0 \to \Gamma_{\mathfrak{a}+\mathfrak{b}}(I) \to \Gamma_{\mathfrak{a}}(I) 
	\oplus \Gamma_{\mathfrak{b}}(I)  \to 
	 \Gamma_{\mathfrak{a}\cdot\mathfrak{b}}(I)\to 0.
	\]
	Moreover $\Gamma_{\mathfrak{a}\cap\mathfrak{b}}(I)=
	\Gamma_{\mathfrak{a}\cdot\mathfrak{b}}(I)=
	\Gamma_{\mathfrak{a}}(I)+\Gamma_{\mathfrak{b}}(I)$.
\end{corollary}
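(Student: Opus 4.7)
The plan is to apply the Mayer-Vietoris long exact sequence of Theorem \ref{mv-5} to $X = I$ and exploit injectivity of $I$ so that the long sequence collapses to the announced short exact sequence. The three equalities at the end will then fall out by comparing this short sequence with the elementary one from Proposition \ref{presup-2}(a).

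First I would invoke Theorem \ref{mv-5} with $X = I$; the three weak pro-regularity hypotheses on $\xx$, $\yy$ and $\xx\yy$ are exactly what is required and they produce the long exact sequence
\[
\ldots \to H^i_{\mathfrak{a}+\mathfrak{b}}(I) \to H^i_{\ag}(I) \oplus H^i_{\bg}(I) \to H^i_{\mathfrak{a}\cdot\mathfrak{b}}(I) \to H^{i+1}_{\mathfrak{a}+\mathfrak{b}}(I) \to \ldots
\]
Next, because $I$ is injective, $\Ext^i_R(R/\mathfrak{c}, I) = 0$ for every ideal $\mathfrak{c}$ and every $i \geq 1$; via the formula $H^i_{\mathfrak{c}}(\cdot) = \varinjlim_t \Ext^i_R(R/\mathfrak{c}^t, \cdot)$ recalled in \ref{rem1}(a), this forces $H^i_{\mathfrak{c}}(I) = 0$ for all $i \geq 1$ and every $\mathfrak{c} \in \{\ag, \bg, \ag+\bg, \ag\cdot\bg\}$. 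Every term of the long sequence outside degree zero therefore vanishes, and it truncates to the announced short exact sequence.

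For the remaining equalities, I would compare this sequence with the short exact sequence supplied by Proposition \ref{presup-2}(a),
\[
0 \to \Gamma_{\mathfrak{a}+\mathfrak{b}}(I) \to \Gamma_{\ag}(I) \oplus \Gamma_{\bg}(I) \to \Gamma_{\ag}(I) + \Gamma_{\bg}(I) \to 0.
\]
Since the two short exact sequences share the same first two terms and the same first two maps, both coming from the diagonal/difference construction of Lemma \ref{sup-1}, their quotients must coincide, which gives $\Gamma_{\ag}(I) + \Gamma_{\bg}(I) = \Gamma_{\mathfrak{a}\cdot\mathfrak{b}}(I)$. Finally the inclusions $\mathfrak{a}\cdot\mathfrak{b} \subset \mathfrak{a}\cap\mathfrak{b} \subset \mathfrak{a}, \mathfrak{b}$ sandwich $\Gamma_{\mathfrak{a}\cap\mathfrak{b}}(I)$ between $\Gamma_{\ag}(I) + \Gamma_{\bg}(I)$ and $\Gamma_{\mathfrak{a}\cdot\mathfrak{b}}(I)$, forcing all three to agree. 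The one subtle point to justify will be the identification of the Mayer-Vietoris maps with those of \ref{sup-1}, which is built into the \v{C}ech construction of \ref{mv-4} but deserves a sentence to keep the argument transparent; apart from that the proof is a formal consequence of Theorem \ref{mv-5} together with the vanishing of higher Ext from $R/\mathfrak{c}$ into an injective module.
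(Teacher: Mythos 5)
Your proof is correct and follows essentially the same route as the paper: specialize the long exact sequence of Theorem \ref{mv-5} to $X = I$, observe that injectivity of $I$ kills all higher local cohomology so the sequence truncates at degree zero, compare the resulting short exact sequence with the one from Proposition \ref{presup-2}(a) to obtain $\Gamma_{\mathfrak{a}\cdot\mathfrak{b}}(I)=\Gamma_{\mathfrak{a}}(I)+\Gamma_{\mathfrak{b}}(I)$, and finish with the containments coming from $\ag\cdot\bg\subseteq\ag\cap\bg\subseteq\ag,\bg$. You actually spell out the vanishing step in more detail than the paper, which just calls the short sequence ``a particular case'' of the long one, and the ``subtle point'' you flag about matching the Mayer--Vietoris degree-zero maps with the diagonal/difference maps of Lemma \ref{sup-1} is the right thing to keep in view, since it is what upgrades the isomorphism of cokernels to an actual equality of submodules of $I$.
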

		
\begin{proof} 
	The short exact sequence is a particular case of the long exact sequence in   \ref{mv-5}. This sequence together with the one in \ref{presup-2}(a) 
	implies that  $ \Gamma_{\mathfrak{a}\cdot\mathfrak{b}}(I)=
	\Gamma_{\mathfrak{a}}(I)+\Gamma_{\mathfrak{b}}(I)$.
 	Then we have the following obvious containment relations
 	$$
 	\Gamma_{\mathfrak{a}\cap\mathfrak{b}}(I) \subseteq \Gamma_{\mathfrak{a}\cdot\mathfrak{b}}(I)=\Gamma_{\mathfrak{a}}(I)+\Gamma_{\mathfrak{b}}(I) \subseteq \Gamma_{\mathfrak{a}\cap\mathfrak{b}}(I),
 	$$
 	they finish the proof.
\end{proof}

Now we are prepared for our last purpose in this section. We need a 
technical lemma.
	
\begin{lemma} \label{mv-7} 
	Let $\ag$ be an ideal of a commutative ring $R$ and let $\{\cg_n\}_{n \geq 1}$ 
	denote a descending sequence of ideals such that $\ag^n\subset \cg_n$ for all 
	$n\geq 1$.Suppose that the natural map 
	$$
	\varinjlim \Hom_R(R/\cg_n,I) \to  \varinjlim \Hom_R(R/\ag^
	n,I)
	$$ 
	is an isomorphism for any injective $R$-module $I$. 
	Then the set of ideals $\{\cg_n \}_{n \geq 1}$ defines the $\ag$-adic topology.
\end{lemma}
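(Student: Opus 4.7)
The plan is to interpret the hypothesis concretely as a statement about annihilators in a single injective module, and then to test it against a carefully chosen injective module. For any injective $R$-module $I$ and any ideal $\cg$, one has $\Hom_R(R/\cg, I) \cong (0 :_I \cg)$. Since $\{\cg_n\}$ is descending and $\Hom_R(-, I)$ is exact on $I$ injective, the transition maps in the direct system $\{\Hom_R(R/\cg_n, I)\}_{n \geq 1}$ are injective, and so $\varinjlim \Hom_R(R/\cg_n, I)$ identifies with the union $\bigcup_n (0 :_I \cg_n) \subset I$. Similarly, $\varinjlim \Hom_R(R/\ag^n, I) = \bigcup_n (0 :_I \ag^n) = \gam(I)$. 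The inclusions $\ag^n \subset \cg_n$ turn the structural map of the statement into an inclusion $\bigcup_n (0 :_I \cg_n) \hookrightarrow \gam(I)$, and the hypothesis asserts this inclusion is an equality for every injective $I$.

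Given this reformulation, I would fix an arbitrary $n \geq 1$ and try to produce an index $m$ with $\cg_m \subset \ag^n$ (the other containment is built into the assumption). The natural test object is $I := E(R/\ag^n)$, the injective hull, together with the image $\bar 1 \in R/\ag^n \hookrightarrow I$ of $1 \in R$. By construction $\ag^n \bar 1 = 0$, so $\bar 1 \in \gam(I)$. Applying the hypothesis, $\bar 1$ must already be annihilated by $\cg_m$ for some $m \geq 1$. The key observation is that the annihilator of $\bar 1$ computed inside $R/\ag^n \subset I$ is exactly $\ag^n$, so $\cg_m \bar 1 = 0$ forces $\cg_m \subset \ag^n$, as required. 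Combined with $\ag^n \subset \cg_n$, this shows the two filtrations are cofinal and hence define the same adic topology.

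There is no real technical obstacle here; the whole argument rests on a single observation, namely that Hom into an injective module detects annihilators, and that the injective hull $E(R/\ag^n)$ is a universal witness for the ideal $\ag^n$. The main conceptual step is simply realising that this is the right injective module to plug into the hypothesis; once it is chosen the conclusion follows by reading off the annihilator of the canonical generator.
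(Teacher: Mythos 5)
Your argument is correct and takes a genuinely different (though related) route from the paper's. The paper encodes the hypothesis through the ``defect'' modules $\cg_n/\ag^n$: applying $\Hom_R(-,I)$ to the short exact sequences $0\to \cg_n/\ag^n \to R/\ag^n \to R/\cg_n \to 0$ and passing to the direct limit, the hypothesis becomes $\varinjlim \Hom_R(\cg_n/\ag^n, I)=0$ for every injective $I$; fixing $n$ and embedding $\cg_n/\ag^n$ into an injective $I^0$, the vanishing forces some transition map $\cg_m/\ag^m \to \cg_n/\ag^n$ to be zero, which is precisely $\cg_m\subseteq \ag^n$. You instead rewrite the hypothesis as the equality $\bigcup_m(0:_I\cg_m)=\gam(I)$ inside every injective $I$ and then test it on the single module $E(R/\ag^n)$ via the canonical generator $\bar 1$, whose $R$-annihilator is exactly $\ag^n$. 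Both proofs hinge on applying the hypothesis to one well-chosen injective hull; yours is marginally more streamlined in that it bypasses the auxiliary short exact sequence and the cokernel calculation, while the paper's makes the quotient $\cg_n/\ag^n$ visible as the structural obstruction, which is perhaps more systematic. One small cosmetic point: the injectivity of the transition maps $\Hom_R(R/\cg_n,I)\to\Hom_R(R/\cg_m,I)$ for $m\ge n$ follows from left-exactness of $\Hom_R(-,I)$ applied to the surjection $R/\cg_m\twoheadrightarrow R/\cg_n$, and does not use the injectivity of $I$, contrary to what your phrasing suggests.
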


\begin{proof} 
	The short exact sequences $0\to \cg_n/\ag^n \to R/\ag^n \to R/\cg_n \to  0$ induce a direct system of short exact sequences
	$$
	0 \to \Hom_R(R/\cg_n, I) \to \Hom_R(R/\ag^n, I) \to \Hom_R(\cg_n/\ag^n , I)\to 0
	$$
	for any injective $R$-module $I$. By passing to the limit there is a short exact sequence 
	$$
	0 \to \varinjlim \Hom_R(R/\cg_n, I) \to \varinjlim \Hom_R(R/\ag^n, I) \to \varinjlim \Hom_R(\cg_n/\ag^n , I)\to 0.
	$$
	Because of our assumption it follows that $\varinjlim \Hom_R(\cg_n/\ag^n , I) =0$ for any  injective $R$-module $I$.
	Now let us fix an $n\in \mathbb N _+$ and let $f : \cg_n/\ag^n\hookrightarrow I^0$ denote an injection into some
 	injective $R$-module $I^0$. Note that $f\in \Hom_R(\cg_n/\ag^n , I^0)$. Because of the vanishing $\varinjlim \Hom_R(\cg_n/\ag^n , I^0) =0$ there must be an integer $m \geq n$ such
 	that the image of $f$ in $\Hom_R(\cg_m/\ag^m, I^0)$ is zero. That is,  
 	the composite of the maps
 	\[
 	\cg_m/\ag^m \to \cg_n/\ag^n  \stackrel{f}\hookrightarrow I^0
 	\]
 	is zero. Since $f$ is an injection it follows that the first map is  zero, and that $\cg_m  \subseteq \ag^n$.
\end{proof}

The following emphasizes again the ubiquity of the weak pro-regularity conditions.

\begin{theorem} \label{mv-8} 
	Let $\ag$ and $\bg$ two finitely generated ideals of the commutative ring $R$, generated respectively by the sequences $\xx=x_1, \ldots , x_k$ and $\yy=y_1, \ldots , y_l$. Assume that the three sequences $\xx, \yy, \xx \yy$ are weakly pro-regular.

   Then the set of ideals $\{\mathfrak{a}^n \cap \mathfrak{b}^n \}_{n \geq 1}$ 
   defines the $\mathfrak{a} \cap \mathfrak{b}$-adic topology as well as the 
   $\mathfrak{a} \cdot \mathfrak{b}$-adic topology. In particular the ideals 
   $\ag\cdot \bg$ and $\ag\cap \bg$ define the same adic topology.
\end{theorem}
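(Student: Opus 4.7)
The plan is to apply Lemma \ref{mv-7} twice to the descending family of ideals $\{\ag^n \cap \bg^n\}_{n \geq 1}$, first with base ideal $\ag \cap \bg$ and then with base ideal $\ag \cdot \bg$. The required inclusions $(\ag\cap\bg)^n \subseteq \ag^n \cap \bg^n$ and $(\ag\cdot\bg)^n = \ag^n \bg^n \subseteq \ag^n \cap \bg^n$ are immediate, so the hypothesis of Lemma \ref{mv-7} that each power of the base ideal lies in the corresponding term of the family is satisfied in both situations.

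To verify the Hom-limit hypothesis of Lemma \ref{mv-7}, I fix an injective $R$-module $I$ and realise each direct limit concretely as a submodule of $I$: for any ideal $\cg \subseteq R$ one has $\varinjlim_n \Hom_R(R/\cg^n, I) = \Gamma_{\cg}(I)$, and similarly $\varinjlim_n \Hom_R(R/(\ag^n \cap \bg^n), I) = \{m \in I : (\ag^n \cap \bg^n)m = 0 \text{ for some } n\}$. Under these identifications the natural map of Lemma \ref{mv-7} is simply the inclusion of submodules of $I$, so it is enough to prove
$$
\varinjlim_n \Hom_R(R/(\ag^n \cap \bg^n), I) \;=\; \Gamma_{\cg}(I)
$$
for $\cg \in \{\ag\cap\bg,\; \ag\cdot\bg\}$. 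By Proposition \ref{presup-2}(b) the left-hand side equals $\Gamma_\ag(I) + \Gamma_\bg(I)$, and by Corollary \ref{mv-6}, which uses the weak pro-regularity of the three sequences $\xx$, $\yy$ and $\xx\yy$, this sum equals $\Gamma_{\ag\cap\bg}(I) = \Gamma_{\ag\cdot\bg}(I)$. Hence Lemma \ref{mv-7} applies in both cases, proving that $\{\ag^n \cap \bg^n\}_{n \geq 1}$ defines simultaneously the $(\ag\cap\bg)$-adic and the $(\ag\cdot\bg)$-adic topology.

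The final ``in particular'' statement follows at once: two topologies admitting a common basis of neighbourhoods of zero must coincide. The essential ingredient — and the only step that genuinely uses the weak pro-regularity hypothesis — is the equality $\Gamma_\ag(I) + \Gamma_\bg(I) = \Gamma_{\ag\cdot\bg}(I)$ supplied by Corollary \ref{mv-6}; everything else is a routine check that the natural maps in Lemma \ref{mv-7} align with honest inclusions of submodules inside $I$.
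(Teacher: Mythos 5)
Your proof is correct and takes essentially the same route as the paper: both rely on the containments $(\ag\cdot\bg)^n,(\ag\cap\bg)^n\subseteq\ag^n\cap\bg^n$, the identification from Proposition~\ref{presup-2}(b), the equality $\Gamma_{\ag\cap\bg}(I)=\Gamma_{\ag\cdot\bg}(I)=\Gamma_{\ag}(I)+\Gamma_{\bg}(I)$ from Corollary~\ref{mv-6}, and Lemma~\ref{mv-7} to transport the statement about torsion of injectives into a statement about adic topologies. Your only variation is the explicit realisation of the direct limits as inclusions of submodules of $I$, which makes the appeal to Lemma~\ref{mv-7} a bit more transparent but is not a different argument.
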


\begin{proof} 
	Note first that $(\ag\cdot\bg)^n\subset (\ag\cap\bg)^n \subset \ag^n\cap\bg^n$ for all $n \geq 1$.
	 By view of Corollary \ref{mv-6} we get the natural isomorphisms 
	 \[
	\varinjlim \Hom_R(R/(\ag\cdot \bg)^n, I) \cong \varinjlim \Hom_R(R/(\ag\cap \bg)^n, I) \cong 
	  \Gamma_{\mathfrak{a}}(I)+
	 \Gamma_{\mathfrak{b}}(I).
	 \]
	 By Proposition \ref{presup-2} we have the natural isomorphism
	$$  
	\Gamma_{\mathfrak{a}}(I)+\Gamma_{\mathfrak{b}}(I) \cong \varinjlim \Hom_R(R/\mathfrak{a}^n\cap \mathfrak{b}^n,I).
	$$
	Putting this together it yields natural isomorphisms
	$$\varinjlim \Hom_R(R/(\ag\cdot \bg)^n, I) \cong \varinjlim \Hom_R(R/(\ag\cap \bg)^n, I) \cong 
	\varinjlim \Hom_R(R/\mathfrak{a}^n\cap \mathfrak{b}^n,I)$$
	and the conclusion  follows by Lemma \ref{mv-7}.
\end{proof}

\begin{remark}       
	Let us look again at the long exact sequence in \ref{mv-5}. By view of the above \ref{mv-8} we may now replace in it the product ideal $\ag\cdot \bg$ by the intersection $\ag\cap \bg$. Note that 
	$H^i_{\ag \cdot \bg}(X)\cong H^i_{\ag \cap \bg}(X)$ for all $i$ and all complexes $X$ because the ideals $\ag\cdot \bg$ and $\ag \cap \bg$ define the same adic topology. 
\end{remark}

\section{Modules of fractions and injectivity}

Now let $S$ denote a multiplicatively closed subset in the ring $R$. 
For an $R$-module $M$ we denote by $\iota_{S, M}$
the natural map $M\to M_S$ and by $K_S(M)$ its kernel. There is the question to 
know when the localization $I_S$ of an injective $R$-module $I$ is again injective. 
This was claimed to be true by Rotman (see \cite[3.76]{Rj}) and shown to be 
incorrect by Dade (see his interesting paper \cite{D}), though Dade did not provide 
an explicit example (a first concrete example of an injective module that does not 
localize can be found in \cite[A.5.4]{SpSa}).
This problem seems to be related to the question when $\Gamma_{\mathfrak{a}}(I)$ 
is an injective $R$-module.  Note that $K_S(M) = \sum_{s\in S} \Gamma_{sR}(M)$. We now investigate a little bit in this direction and provide a further example.

The following elementary lemma will be used repeatedly.

\begin{lemma} \label{loc-1}
	Let $S \subset  R$ denote a multiplicatively closed set in the commutative ring $R$. For an $R$-module $M$ the natural 
	homomorphism 
	\[
	\iota_{S, M} : M \to M_S, \; m \mapsto m/1,
	\]
	is surjective if and only if $M/K_S(M) =s\cdot M/K_S(M)$  for all $s \in S$. 
	The last condition is equivalent to $M = K_{S}(M) + s\cdot M$ for  all $s \in S$.
\end{lemma}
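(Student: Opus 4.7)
The plan is to factor $\iota_{S,M}$ through the quotient $\bar M := M/K_S(M)$ and exploit the exactness of localization. Since by definition $K_S(M) = \ker(\iota_{S,M})$, the map $\iota_{S,M}$ factors as
$$
M \twoheadrightarrow \bar M \xrightarrow{\bar\iota} M_S,
$$
and as localization is exact with $K_S(M)_S = 0$, the induced map $\bar\iota$ is canonically identified with the natural map $\bar M \to \bar M_S$.

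Next I would observe that $\bar M$ has no $S$-torsion: if $sm \in K_S(M)$ with $s\in S$, then $s'sm = 0$ for some $s'\in S$, so $m\in K_S(M)$. Hence $\bar\iota$ is injective, and surjectivity of $\iota_{S,M}$ is equivalent to $\bar\iota$ being an isomorphism, i.e. to $\bar M = \bar M_S$. Because $\bar M$ embeds into $\bar M_S$, this equality amounts to $s\bar M = \bar M$ for every $s\in S$: each fraction $\bar m/s \in \bar M_S$ must come from some $\bar m'\in \bar M$, and injectivity of $\bar\iota$ turns $\bar m/s = \bar m'/1$ into $\bar m = s\bar m'$ in $\bar M$. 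This is the first stated equivalent condition.

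Finally, the condition $s\bar M = \bar M$ lifts directly to $M = K_S(M) + sM$: choosing for each $m\in M$ an element $m'\in M$ with $\bar m = s\bar m'$ gives $m - sm' \in K_S(M)$, and the converse reads back at once. There is no real obstacle in the argument; it is a formal consequence of the exactness of localization together with the very definition of $K_S(M)$.
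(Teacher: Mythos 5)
Your proof is correct and follows essentially the same route as the paper's: factor $\iota_{S,M}$ through $\bar M=M/K_S(M)$, note that $\bar M$ has no $S$-torsion so multiplication by each $s\in S$ is injective on $\bar M$, and then observe that surjectivity of $\iota_{S,M}$ is equivalent to these multiplications also being surjective, i.e.\ to $s\bar M=\bar M$. The paper states this more tersely but relies on exactly the same two facts, so there is nothing to add.
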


\begin{proof}
	First note that the last equivalence is easily seen. 
	
	If the map $\iota_{S, M}$ is surjective, then $M_S\cong M/K_S(M)$, in particular $M/K_S(M) =s\cdot M/K_S(M)$  for all $s \in S$. 
	
	Conversely, assume that $M/K_S(M) =s\cdot M/K_S(M)$  for all $s \in S$. This means that the muliplications by any $s\in S$ are surjective on $M/K_S(M)$. As  they are also injective on $M/K_S(M)$ it follows that $M/K_S(M)\cong M_S$. Whence $\iota_{S, M}$ is surjective.
\end{proof}

\begin{proposition} \label{loc-2} 
	Let $S$ denote a multiplicatively closed set in the commutative ring $R$ and let $M$ be an $R$-module. If the $R$-module $M/K_S(M)$ is injective, then $M/K_S(M)\cong M_S$. 
\end{proposition}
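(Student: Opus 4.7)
The plan is to set $N := M/K_S(M)$ and prove directly that the natural map $\iota_{S,N} : N \to N_S$ is an isomorphism; together with a natural identification $N_S \cong M_S$, this gives the conclusion.

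First I would verify that $N$ is $S$-torsion-free. If $\bar m \in N$ satisfies $s\bar m = 0$ for some $s \in S$, lifting to $m \in M$ gives $sm \in K_S(M)$, so $s'sm = 0$ for some $s' \in S$, whence $m \in K_S(M)$ (as $s's \in S$) and $\bar m = 0$. Consequently $K_S(N) = 0$, the map $\iota_{S,N}$ is injective, and multiplication by every $s \in S$ is injective on $N$. Next, applying localization to the short exact sequence $0 \to K_S(M) \to M \to N \to 0$ and noting that $(K_S(M))_S = 0$ (every element of $K_S(M)$ is killed by some $s \in S$), one obtains a natural isomorphism $M_S \cong N_S$. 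Hence it suffices to show $N \cong N_S$.

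The main step, where the injectivity of $N$ enters, is to show that multiplication by each $s \in S$ is surjective on $N$. Fix $s \in S$ and $n \in N$, and define $f : sR \to N$ by $f(sr) := rn$. This is well-defined: if $sr = sr'$ in $R$, then $s(r-r') = 0$, hence $s\bigl((r-r')n\bigr) = 0$ in $N$, and $S$-torsion-freeness of $N$ forces $(r-r')n = 0$. Since $N$ is injective, $f$ extends to a homomorphism $\tilde f : R \to N$, and setting $n' := \tilde f(1)$ gives $sn' = \tilde f(s) = f(s) = n$. Thus $sN = N$ for all $s \in S$.

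Combining these: $K_S(N) = 0$ and $N = sN$ for all $s \in S$, so by Lemma \ref{loc-1} (applied to $N$ in place of $M$), $\iota_{S,N}$ is surjective, hence an isomorphism. Chaining with $N_S \cong M_S$ yields $M/K_S(M) = N \cong M_S$, as desired. I expect the only subtle point to be the well-definedness of $f : sR \to N$, which reduces cleanly to the $S$-torsion-freeness of $N$ established in the first step; everything else is formal.
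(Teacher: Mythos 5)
Your proof is correct and follows essentially the same route as the paper: establish that $N := M/K_S(M)$ is $S$-torsion-free, use injectivity of $N$ to show multiplication by each $s\in S$ is surjective on $N$, then invoke Lemma~\ref{loc-1}. The only cosmetic difference is that you apply Lemma~\ref{loc-1} to $N$ and chain through $N_S\cong M_S$, whereas the paper applies it directly to $M$; these are interchangeable. One genuine merit of your write-up is that you make explicit why injectivity of $N$ forces $sN=N$ even when $s$ may be a zero-divisor in $R$: the map $f:sR\to N$, $sr\mapsto rn$, is well-defined precisely because $N$ is $S$-torsion-free, and then Baer's criterion applies. The paper asserts this surjectivity in one line, implicitly relying on the same torsion-freeness observation, so your filling in of that step is a useful clarification rather than a deviation.
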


\begin{proof}
	For all $s\in S$ the multiplication by $s$ on $M/K_S(M)$ 
	is always injective. When the $R$-module $M/K_S(M)$ is injective it follows that these multiplications
	 are also surjective. Whence $\iota_{S, M}$ is surjective (see \ref{loc-1}) and 
	$M/K_S(M)\cong M_S$.
\end{proof}

\begin{corollary} \label{bisloc-1} 
	Let $S$ denote a multiplicatively closed set in the commutative ring $R$ and let $I$ denote an injective $R$-module. Assume that $K_S(I)$ is injective. 
	Then the localized module $I_S$ is injective and $I_S\cong I/K_S(I)$.
\end{corollary}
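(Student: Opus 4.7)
The plan is to combine the splitting of the short exact sequence coming from the injectivity hypothesis on $K_S(I)$ with the preceding Proposition \ref{loc-2}.

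First I would consider the natural short exact sequence
\[
0 \to K_S(I) \to I \to I/K_S(I) \to 0.
\]
Since $K_S(I)$ is injective by hypothesis, this sequence splits, exhibiting $I/K_S(I)$ as a direct summand of the injective $R$-module $I$. A direct summand of an injective module is injective, so $I/K_S(I)$ is an injective $R$-module.

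Next I would invoke Proposition \ref{loc-2}: having just shown that $I/K_S(I)$ is injective, that proposition immediately yields the isomorphism $I/K_S(I) \cong I_S$. Combining the two steps, $I_S$ is isomorphic to an injective module, hence is itself injective.

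There is no real obstacle here; the statement is a direct corollary of \ref{loc-2} once one notices that injectivity of $K_S(I)$ forces the quotient $I/K_S(I)$ to inherit injectivity as a summand of $I$. The only thing to be slightly careful about is that the argument uses the splitting of the defining short exact sequence (rather than any intrinsic property of the localization), so the hypothesis that $I$ itself is injective is needed precisely to make $I/K_S(I)$ a summand of an injective module.
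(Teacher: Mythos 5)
Your proof is correct and is essentially identical to the paper's own argument: both use the injectivity of $K_S(I)$ to split the short exact sequence $0 \to K_S(I) \to I \to I/K_S(I) \to 0$, conclude that $I/K_S(I)$ is injective as a summand of $I$, and then invoke Proposition \ref{loc-2} to obtain $I/K_S(I) \cong I_S$.
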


\begin{proof} 
	By the assumption on $K_S(I)$ the short exact sequence 
	$$
	0 \to K_S(I) \to I \to I/K_S(I) \to 0
	$$
	is split exact and $I/K_S(I)$ is injective. We conclude by \ref{loc-2}.
\end{proof}

In the case when the multiplicatively closed subset $S$ of $R$ consists of the powers of a single element we have a refinement of Proposition \ref{loc-2}
 
\begin{proposition}\label{bisloc-2}
	Let $x \in R$ be an element. For an $R$-module $M$ the following conditions 
	are equivalent:
	\begin{itemize}
		\item[(i)] The $R$-module $M/\Gamma_{\xx R}(M)$ is relatively-$xR$-injective.
		\item[(ii)]  $\Edp(xR, M/\Gamma_{\xx R}(M))=\infty$.
		\item[(iii)] $M/\Gamma_{\xx R}(M)=x\cdot M/\Gamma_{\xx R}(M)$.
		\item[(iv)]  The natural map $M\to M_x$ is surjective.
	\end{itemize}
\end{proposition}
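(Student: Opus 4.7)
The plan is to close the cycle (i) $\Rightarrow$ (ii) $\Rightarrow$ (iii) $\Rightarrow$ (iv) $\Rightarrow$ (i). I will write $N := M/\Gamma_{xR}(M)$ throughout; a key observation used several times is that $N$ is $x$-torsion-free. Indeed, if $x^k m \in \Gamma_{xR}(M)$ then some higher power of $x$ annihilates $m$, hence $m \in \Gamma_{xR}(M)$. Consequently $\Hom_R(R/xR, N) = 0$ and multiplication by $x$ on $N$ is injective.

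For (i) $\Rightarrow$ (ii), take $\bg = xR$ in the definition of relatively-$xR$-injective to get $\Ext^i_R(R/xR, N) = 0$ for all $i \geq 1$; combining with the vanishing $\Hom_R(R/xR, N) = 0$ just noted yields $\Edp(xR, N) = \infty$. For (ii) $\Rightarrow$ (iii) I apply $\Hom_R(-, N)$ to the short exact sequence $0 \to R \xrightarrow{x} R \to R/xR \to 0$; this gives an identification $\Ext^1_R(R/xR, N) \cong N/xN$, which vanishes by hypothesis. For (iii) $\Rightarrow$ (iv) note $K_S(M) = \sum_n \Gamma_{x^n R}(M) = \Gamma_{xR}(M)$ for $S = \{x^n\}_{n\geq 0}$; then (iii) is the statement $M = \Gamma_{xR}(M) + xM$, and Lemma \ref{loc-1} gives surjectivity of $M \to M_x$.

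The main obstacle is (iv) $\Rightarrow$ (i), where a condition involving only $x$ must be promoted to all ideals containing a power of $x$. First, by Lemma \ref{loc-1} the surjectivity of $M \to M_x$ forces $N = xN$. Since multiplication by $x$ is also injective on $N$, it is a bijection; so $N$ carries a canonical $R_x$-module structure extending its $R$-structure, with $x^{-1}$ acting by the inverse of multiplication by $x$. Now let $\bg \supseteq x^n R$ be an ideal. Then $\bg R_x = R_x$ since $x^n$ is a unit in $R_x$, so
\[
(R/\bg) \otimes_R R_x \;=\; R_x / \bg R_x \;=\; 0.
\]
Because the localization map $R \to R_x$ is flat, tensoring an $R$-projective resolution of $R/\bg$ with $R_x$ yields an $R_x$-projective resolution of $(R/\bg)\otimes_R R_x$, whence the change-of-rings isomorphism
\[
\Ext^i_R(R/\bg, N) \;\cong\; \Ext^i_{R_x}\bigl((R/\bg)\otimes_R R_x,\, N\bigr) \;=\; \Ext^i_{R_x}(0, N) \;=\; 0
\]
for every $i \geq 0$. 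This establishes (i), closing the cycle.
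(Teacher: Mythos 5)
Your proof is correct, and for two of the four implications it genuinely departs from the paper's argument. For (ii)$\Rightarrow$(iii) the paper passes through the general recall $\Edp(xR,N)=\infty \iff \Tcodp(xR,N)=\infty$ and then reads off $\Tor_0^R(R/xR,N)=N/xN=0$; you instead compute $\Ext^1_R(R/xR,N)\cong N/xN$ directly from the free resolution $0\to R\xrightarrow{x}R\to R/xR\to 0$, which is more elementary and bypasses the Tor-codepth machinery entirely. For (iv)$\Rightarrow$(i) the paper identifies $N\cong M_x$, observes $\Ext^i_R(R/xR,M_x)=0$ because $x$ acts as an automorphism, and then (implicitly) invokes the monotonicity $\Edp(xR,N)\leq\Edp(\bg,N)$ from \ref{edp} to pass from the single ideal $xR$ to every $\bg\supseteq x^nR$. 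You instead promote $N$ to an $R_x$-module and use the flat base change isomorphism $\Ext^i_R(R/\bg,N)\cong\Ext^i_{R_x}((R/\bg)\otimes_R R_x,N)$ together with $(R/\bg)\otimes_R R_x=0$; this makes the jump from $xR$ to arbitrary $\bg$ containing a power of $x$ fully explicit and self-contained, at the cost of introducing a change-of-rings argument. Both approaches are sound; the paper's is more economical given that it already cites the Edp/Tcodp facts, while yours is more transparent about why the relative injectivity holds for every admissible $\bg$, not just $xR$.
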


\begin{proof}
	(i)$\Rightarrow$(ii): This is clear, note that always $\Hom_R(R/xR, M/\Gamma_{\xx R}(M))=0$.
 
	(ii)$\Rightarrow$(iii): With the condition in (ii) we also have $\Tcodp(xR,  M/\Gamma_{\xx R}(M)) =\infty$, 
	see \ref{edp}. This implies condition (iii).

	(iii)$\Rightarrow$(iv): This follows by \ref{loc-1}. 

	(iv)$\Rightarrow$(i): With the condition in (iv) we have $M/\Gamma_{\xx R}(M) \cong M_x$. Then  note that
 	always $\Ext_R^i(R/xR, M_x)=0$ for all $i\geq 0$ because multiplication by $x$ acts as an automorphism on $M_x$.
\end{proof}

We are ready for the discussion of the following 
example. Note first that an  $R_S$-module is $R$-injective if and only if it is 
$R_S$-injective (see \cite{D} or \cite[A.5.1]{SpSa}). That is because $\Hom_R(M,N) 
\cong \Hom_{R_S}(M,N)$ for two $R_S$-modules $M,N$ and because $R_S$ is $R$-flat.

\begin{example} \label{loc-3} 
	(A) Let $\Bbbk$ denote a field and $x,y$ two variables over $\Bbbk$. Let $R = \Bbbk[|x,y|]$ denote the formal power series ring over $\Bbbk$. 
	Let $E = E_R(R/\mathfrak{m})$ denote 
	the injective hull of the residue field.  We define $S = R \ltimes E$ as the 
	idealization of $R$ by $E$. Then $\Hom_R(S,E)$ is an injective $S$-module 
	and $\Hom_R(S,E) \cong S$ as follows by a Theorem of Faith (see \cite{Fc} or  \cite[A.4.6]{SpSa}). 
	
	Let $(x,0) \in S$. We claim that the localization 
	$S_{(x,0)}$ is not an injective $S$-module. This is equivalent to 
	the fact that $S_{(x,0)}$ is not self-injective (see above).  
	But $S_{(x,0)} \cong R_x$ as is easily seen and $R_x$ is not self-injective. \\
	(B) Moreover, let $\check{C}_{(x,0)}: 0 \to S \to S_{(x,0)} \to 0$ 
	denote the \v{C}ech complex of $S$ with respect to the one length sequence $(x,0)$. 
	Then it follows that 
	\[
	\Gamma_{(x,0)S}(S) = H^0(\check{C}_{(x,0)}) = 0 \ltimes E, \; \mbox{ and } \;
	H^1(\check{C}_{(x,0)}) = (R_x/R) \ltimes 0,
	\]
	so that $H^1(\check{C}_{(x,0)}(S)) \not= 0$ for the injective 
	$S$-module $S$.
	
	Finally, because the natural map $S\to S_{(x, 0)}$ is not surjective, the module $S/ \Gamma_{(x, 0)S}$ is 
	not isomorphic to $S_{(x, 0)}$. Whence it follows by \ref{loc-2} that $S/\Gamma_{(x,0)}(S)$ and also 
	$\Gamma_{(x,0)}(S)$ are not $S$-injective modules.
	Moreover  $S/\Gamma_{(x,0)}(S)$ and  
	$\Gamma_{(x,0)}(S)$ are not relatively-$(x,0)S$-injective modules by \ref{bisloc-2} and \ref{relinj-2}. \\
	(C) Another feature of the example in (A) is the following. Let $S^{\mathbb{N}}$ denote the direct product 
	of copies of $S$ over the index set $\mathbb{N}$. Then $S^{\mathbb{N}}$ is  an injective $S$-module 
	(as a direct product of injective 
	$S$-modules). Now let $S^{(\mathbb{N})}$ be the 
	direct sum of copies of $S$ over the index set $\mathbb{N}$. We claim that  
	$S^{(\mathbb{N})}$ is not an injective $S$-module. To this end look at 
	the short exact sequence 
	\[ 
	0 \to S^{(\mathbb{N})} \to S^{(\mathbb{N})} \to S_{(x,0)} \to 0
	\]
	as it is derived from the isomorphism $S_{(x,0)} \cong \varinjlim \{S_n,(x,0) \}$ 
	with the direct system $S_n = S$ and $S_n \to S_{n+1}$ multiplication 
	by $(x,0)$ for all $n \geq 1$. Assuming that $S^{(\mathbb{N})}$ is $S$-injective it implies that the above sequence is split exact and 
	therefore $S_{(x,0)}$ is $S$-injective. This is not true by (A).
\end{example}

There is the more general question to know when the natural homomorphism $I\to I_S$ 
is surjective for an injective module $I$. Note that this is not always the case, 
as shown by example  \ref{loc-3} (B). In the case when the multiplicatively closed 
subset $S$ of $R$ consists of the powers of a single element the answer is rather 
simple.

\begin{proposition} \label{loc-4} 
	Let $x$ denote an element of the commutative ring $R$. The following conditions are equivalent:
	\begin{itemize}  
		\item[(i)] The natural map $J\to J_x$ is surjective  for any relatively-$xR$-injective module $J$.
		\item[(ii)] The natural map $I\to I_x$ is surjective for any injective module $I$.
		\item[(iii)] The one length sequence $x$ is weakly pro-regular.
	\end{itemize}
\end{proposition}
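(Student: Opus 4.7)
The plan is to prove this via a short cycle (i)$\Rightarrow$(ii)$\Rightarrow$(iii)$\Rightarrow$(i), using essentially no new content: each step is a direct translation through one of the results already proved in the paper (namely \ref{bisloc-2}, \ref{relinj-2}, \ref{weak} and \ref{corweak}).

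The implication (i)$\Rightarrow$(ii) is immediate, since every injective $R$-module is in particular relatively-$xR$-injective.

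For (ii)$\Rightarrow$(iii) I would argue as follows. Let $I$ be an arbitrary injective $R$-module. By assumption the natural map $I\to I_x$ is surjective, so Proposition \ref{bisloc-2} (equivalence (iv)$\Leftrightarrow$(i) there) tells us that $I/\Gamma_{xR}(I)$ is relatively-$xR$-injective. Applying Proposition \ref{relinj-2} to the relatively-$xR$-injective module $I$ (which injective modules certainly are), we conclude that $\Gamma_{xR}(I)$ is relatively-$xR$-injective. Since this holds for every injective $I$, the equivalence (ii)$\Leftrightarrow$(iii) of Theorem \ref{weak} yields that $x$ is weakly pro-regular.

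For (iii)$\Rightarrow$(i), assume that the one-length sequence $x$ is weakly pro-regular and let $J$ be any relatively-$xR$-injective $R$-module. Corollary \ref{corweak} (applied to the single-element sequence $x$) shows that $J/\Gamma_{xR}(J)$ is again relatively-$xR$-injective. By Proposition \ref{bisloc-2} (equivalence (i)$\Leftrightarrow$(iv)) this is equivalent to saying that the natural map $J\to J_x$ is surjective, which is exactly (i).

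There is no real obstacle here: the proof is essentially a bookkeeping exercise, since all of the substantive work has been carried out in \ref{bisloc-2}, \ref{relinj-2}, \ref{weak} and \ref{corweak}. The only point to be careful about is to invoke the single-element versions of Theorem \ref{weak} and Corollary \ref{corweak} (so that the weak pro-regularity condition in (iii) matches up with the hypothesis of those statements applied to the sequence $\xx=x$).
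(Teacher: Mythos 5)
Your proof is correct, but it takes a genuinely different route from the paper's. The paper's argument is a direct two--step computation via the \v{C}ech complex: since $H^1(\check{C}_x\otimes_R M)$ is exactly the cokernel of $M\to M_x$, condition (ii) means $H^1(\check{C}_x\otimes_R I)=0$ for all injective $I$, which is precisely the characterization of weak pro-regularity recalled in \ref{rec1}; and conversely, if $x$ is weakly pro-regular then $H^1(\check{C}_x\otimes_R J)\cong H^1_{xR}(J)=0$ for $J$ relatively-$xR$-injective, so $J\to J_x$ is onto. You instead chain through the structural equivalences already proved in the paper: \ref{bisloc-2} to translate surjectivity of $M\to M_x$ into relative injectivity of $M/\Gamma_{xR}(M)$, then \ref{relinj-2} to pass between $\Gamma_{xR}$ and the quotient, then \ref{weak} and \ref{corweak} to connect with weak pro-regularity. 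Both are valid; the paper's version is shorter and self-contained modulo \ref{rec1}, while yours is more modular and makes the logical dependence on \ref{bisloc-2}--\ref{corweak} explicit. The only substantive difference in content is that the paper's (ii)$\Rightarrow$(iii) does not pass through Theorem \ref{weak} at all (which is a strictly stronger statement than needed here), whereas yours does; this costs nothing in rigour but is slightly heavier machinery than necessary.
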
 

\begin{proof} 
	(i)$\Rightarrow$(ii): This is obvious.

	(ii)$\Rightarrow$(iii): If $I\to I_x$ is onto, then $H^1(\check{C}_x\otimes_R I)=0$. The claim follows by the recalls in \ref{rec1}.  

	(iii)$\Rightarrow$(i): Assume that the one length sequence $x$ is weakly 
	pro-regular and let $J$ be any relatively-$xR$-injective $R$-module. On one hand we 
	have $H^1(\check{C}_x\otimes_R J)=H^1_{xR}(J)$ (see \ref{rec1}). 
	On the other hand we also have $H^1_{xR}(J)=0$ (see \ref{rem1}(a)). Whence  the 
	natural map $J\to J_x$ is onto.
\end{proof}

\section{Ideal transforms}
In the final section we relate our results to the  ideal transforms.

\begin{definition} \label{atrans}  
	For an ideal $\mathfrak{a}$ of a commutative ring $R$ and an $R$-module $M$ the
	$\mathfrak{a}$-transform of $M$ is defined by  
	$\mathcal{D}_{\mathfrak{a}}(M) = \varinjlim \Hom_R(\mathfrak{a}^n,M)$.
	
	For this and related results we refer to \cite{BS} and \cite[Chapter 12, section 5]{SpSa}. 
\end{definition}

First we shall discuss in more detail the $\mathfrak{a}$-transform of 
an $R$-module $M$.

\begin{proposition} \label{sup-9}
	Let $\mathfrak{a} \subset R$ denote an ideal of the commutative ring $R$. Let $M$ be an $R$-module. 
	\begin{itemize}
		\item[(a)] There is a natural homomorphism $\tau_M: M\to \mathcal{D}_{\ag}(M)$ and a short exact sequence 
		\[
		0 \to \Gamma_{\mathfrak{a}}(M) \to M \stackrel{\tau_M}{\longrightarrow} 
		\mathcal{D}_{\mathfrak{a}}(M) \to H^1_{\mathfrak{a}}(M) \to 0.
		\]
		\item[(b)] Let $S$ denote a multiplicatively closed subset of $R$ 
		such that $\mathfrak{a} \cap S \not= \emptyset$. There is a natural homomorphism 
		$\xi_M :\mathcal{D}_{\mathfrak{a}}(M) \to M_S$ such that $\xi_M \circ \tau_M = \iota_{S,M} : M \to M_S$.	
		\item[(c)] If $M=J$ is relatively-$\mathfrak{a}$-injective, then $\mathcal{D}_{\ag}(J)\cong J/\gam (J)$.
	\end{itemize}
\end{proposition}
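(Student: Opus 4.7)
The plan is to treat (a), (b), (c) in that order, with (a) and (c) reducing to standard manipulations of long exact sequences under direct limits, and (b) being a concrete computation at the level of the defining direct system.

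For (a), I would first define $\tau_M$. The inclusion $\mathfrak{a}^n \hookrightarrow R$ induces by restriction a map $M \cong \Hom_R(R,M) \to \Hom_R(\mathfrak{a}^n, M)$ sending $m$ to multiplication by $m$; these maps are compatible as $n$ varies and produce $\tau_M: M \to \mathcal{D}_{\mathfrak{a}}(M)$ in the direct limit. For the four-term exact sequence, I apply $\Hom_R(-, M)$ to the short exact sequence $0 \to \mathfrak{a}^n \to R \to R/\mathfrak{a}^n \to 0$ and use $\Ext^1_R(R,M) = 0$ to get, for each $n$, the exact sequence
\[
0 \to \Hom_R(R/\mathfrak{a}^n, M) \to M \to \Hom_R(\mathfrak{a}^n, M) \to \Ext^1_R(R/\mathfrak{a}^n, M) \to 0.
\]
Passing to $\varinjlim_n$ (exact on $R$-modules) and identifying the limits with $\Gamma_{\mathfrak{a}}(M)$, $\mathcal{D}_{\mathfrak{a}}(M)$, and $H^1_{\mathfrak{a}}(M)$ respectively yields the asserted sequence.

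For (b), I fix some $s \in \mathfrak{a} \cap S$, so that $s^n \in \mathfrak{a}^n \cap S$ for every $n \geq 1$. For $f \in \Hom_R(\mathfrak{a}^n, M)$ I set $\varphi_n(f) := f(s^n)/s^n \in M_S$. Compatibility with the transition maps (restriction along $\mathfrak{a}^{n+1}\subset \mathfrak{a}^n$) is immediate since $f|_{\mathfrak{a}^{n+1}}(s^{n+1}) = s\,f(s^n)$, so the $\varphi_n$ induce $\xi_M: \mathcal{D}_{\mathfrak{a}}(M) \to M_S$. Independence of the choice of $s$ follows from the identity $(s')^n f(s^n) = f(s^n (s')^n) = s^n f((s')^n)$ in $M$ for $s' \in \mathfrak{a} \cap S$, which gives $f(s^n)/s^n = f((s')^n)/(s')^n$ in $M_S$. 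The equality $\xi_M \circ \tau_M = \iota_{S,M}$ is then read off the formulas: for $m \in M$, $\tau_M(m)$ is represented by the multiplication $\rho_m: \mathfrak{a}^n \to M$, and $\xi_M([\rho_m]) = s^n m / s^n = m/1$.

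For (c), Remark \ref{rem1}(a) gives $H^1_{\mathfrak{a}}(J) = 0$ whenever $J$ is relatively-$\mathfrak{a}$-injective, so the sequence in (a) degenerates to a short exact sequence
\[
0 \to \Gamma_{\mathfrak{a}}(J) \to J \stackrel{\tau_J}{\longrightarrow} \mathcal{D}_{\mathfrak{a}}(J) \to 0,
\]
whence $\mathcal{D}_{\mathfrak{a}}(J) \cong J/\Gamma_{\mathfrak{a}}(J)$. The only real subtlety in the whole proof is the well-definedness of $\xi_M$ (independence of the auxiliary element $s$ and compatibility with the direct system); everything else is bookkeeping around the direct-limit description of $\mathcal{D}_{\mathfrak{a}}$.
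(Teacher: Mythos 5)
Your proposal matches the paper's own proof step for step: part (a) via applying $\Hom_R(-,M)$ to $0\to\mathfrak{a}^n\to R\to R/\mathfrak{a}^n\to 0$ and taking direct limits, part (b) via the map $g_n\mapsto g_n(x^n)/x^n$ for a chosen $x\in\mathfrak{a}\cap S$, and part (c) by specializing (a) using $H^1_{\mathfrak{a}}(J)=0$. You merely spell out the verifications (well-definedness, independence of the chosen element, compatibility with transition maps) that the paper leaves as "easily seen," and all of them are correct.
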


\begin{proof}
	The proof of (a) is well-known. We only need to take the direct 
	limit of the direct system of exact sequences with the natural maps
	\[
	0 \to \Hom_R R/\ag ^n, M) \to M \to \Hom_R(\ag ^n, M) \to \Ext_R^1(R/\ag^n, M) \to 0.
	\]
	For the proof of (b) we choose first an element $x \in \mathfrak{a} \cap S$. Then we define homomorphisms
	\[ 
	\Hom_R(\ag^n, M)\to M_S \text{ by } g_n\mapsto g_n(x^n)/x^n .
	\]
	They do not depend on the particular choice of $x$ as easily seen. Moreover they provide a direct system of sequences 
	\[ 
	M\to \Hom_R(\ag^n, M)\to M_S.
	\] 
	We take  direct limits and obtain the wanted homomorphism 
	$\xi_M :\mathcal{D}_{\mathfrak{a}}(M) \to M_S$ such that 
	$\xi_M \circ \tau_M = \iota_{S,M} : M \to M_S$.
	
	The proof of (c) follows by (a) since 
	$H^1_{\ag}(J) \cong \varinjlim \Ext_R^1(R/\ag^n,J) =0$ for a relatively-$\ag$-injective $R$-module $J$.
\end{proof}
	
Assume now that the ideal $\ag$ of $R$ is finitely generated, 
say by the sequence $\xx=x_1, \ldots , x_k$. We define the $R$-complex 
$\check{D}_{\xx}$ as the kernel of the natural surjective map 
$\Cech \to R$   (see \cite[6.1.6]{SpSa} for more details).
For an $R$-module $M$ we have a natural injection  
$M/\gam (M) \hookrightarrow H^1(\check{D}_{\xx}\otimes_R M)$.
This follows because the complex $\Cech \otimes_R M$ has the form 
$$
\Cech \otimes_R M : 
0\to M \stackrel{d^0}{\longrightarrow} \oplus_{i=1}^r M_{x_i} 
	\stackrel{d^1}{\longrightarrow} \oplus_{1 \leq i < j \leq k} M_{x_ix_j}
	\stackrel{d^2}{\longrightarrow} \ldots \to M_{x_1\cdots x_k} \to 0
$$ 
and because $M/\gam (M) \cong \im (d^0) \subseteq \Ker (d^1) = H^1(\check{D}_{\xx}\otimes_R M)$.

\begin{proposition} \label{sup-10}
	Let $\xx = x_1,\ldots,x_k$ denote a sequence of elements and $\ag = \xx R$. 
	For any $R$-module $M$ there is an injective  homomorphism 
	$\rho_M : \mathcal{D}_{\ag}(M) \to \oplus_{i=1}^k M_{x_i}$
	such that $\rho_M \circ \tau_M$ is the natural map $ M \to \oplus_{i =1}^k M_{x_i}$ 
 	Moreover it induces an injection  
 	$$ 
 	\mathcal{D}_{\ag}(M) \hookrightarrow H^1(\check{D}_{\xx}\otimes_RM).
 	$$ 
\end{proposition}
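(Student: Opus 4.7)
The plan is to build $\rho_M$ coordinate-wise using Proposition \ref{sup-9}(b). For each $i \in \{1, \dots, k\}$, take the multiplicatively closed set $S_i = \{x_i^n \mid n \geq 0\}$; since $x_i \in \mathfrak{a} \cap S_i$, Proposition \ref{sup-9}(b) produces a natural homomorphism $\xi_{M,i}: \mathcal{D}_{\mathfrak{a}}(M) \to M_{x_i}$ satisfying $\xi_{M,i} \circ \tau_M = \iota_{S_i, M}$. I would then set $\rho_M = (\xi_{M,1}, \dots, \xi_{M,k})$, which by construction satisfies $\rho_M \circ \tau_M = (\iota_{S_1, M}, \dots, \iota_{S_k, M})$, the natural map $M \to \oplus_{i=1}^k M_{x_i}$. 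Concretely, on a representative $g_n \in \Hom_R(\mathfrak{a}^n, M)$ one has $\rho_M([g_n]) = (g_n(x_1^n)/x_1^n, \dots, g_n(x_k^n)/x_k^n)$.

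Next I would verify that the image of $\rho_M$ lies inside $H^1(\check{D}_{\xx} \otimes_R M) = \Ker(d^1)$, where $d^1: \oplus_i M_{x_i} \to \oplus_{i<j} M_{x_i x_j}$ is the \v Cech differential. For a representative $g_n$ as above and any pair $i < j$, the $R$-linearity of $g_n$ gives $x_j^n \cdot g_n(x_i^n) = g_n(x_i^n x_j^n) = x_i^n \cdot g_n(x_j^n)$, so $g_n(x_i^n)/x_i^n = g_n(x_j^n)/x_j^n$ in $M_{x_i x_j}$, which is exactly the vanishing of $d^1$ applied to $\rho_M([g_n])$.

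The main remaining task — and I expect the heart of the argument — is the injectivity of $\rho_M$. Suppose $\rho_M([g_n]) = 0$; then for each $i$ there is an integer $m_i \geq 0$ such that $x_i^{m_i} \cdot g_n(x_i^n) = 0$, i.e. $g_n(x_i^{n+m_i}) = 0$. I would then invoke a pigeonhole argument: set $N = \sum_{j=1}^k (n + m_j - 1) + 1$. For every monomial $x_1^{a_1} \cdots x_k^{a_k}$ with $\sum_j a_j = N$, at least one index $j$ must satisfy $a_j \geq n + m_j$, so that such a monomial is divisible by $x_j^{n+m_j}$ and hence is annihilated by $g_n$ (using $R$-linearity and $g_n(x_j^{n+m_j}) = 0$). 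Since these monomials generate $\mathfrak{a}^N$ (and $N \geq n$, so $\mathfrak{a}^N \subseteq \mathfrak{a}^n$), the restriction $g_n|_{\mathfrak{a}^N}$ vanishes, proving $[g_n] = 0$ in $\mathcal{D}_{\mathfrak{a}}(M) = \varinjlim \Hom_R(\mathfrak{a}^n, M)$.

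Combining these three steps, $\rho_M$ is an injection factoring through $H^1(\check{D}_{\xx} \otimes_R M) \subseteq \oplus_i M_{x_i}$, giving the required natural injection $\mathcal{D}_{\mathfrak{a}}(M) \hookrightarrow H^1(\check{D}_{\xx} \otimes_R M)$.
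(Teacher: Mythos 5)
Your proposal follows essentially the same route as the paper's proof: define $\rho_M$ coordinate-wise via \ref{sup-9}(b), check that the image satisfies the cocycle condition in $M_{x_ix_j}$, and prove injectivity by a pigeonhole bound on the exponent. The paper, however, asserts the containment $\ag^{n+m+k}\subseteq (x_1^{n+m},\ldots,x_k^{n+m})R$ at the corresponding step, which is false in general for $k\geq 2$ once $n+m$ is large (e.g.\ $x_1^3x_2^3\notin(x_1^4,x_2^4)$ in a polynomial ring, though $x_1^3x_2^3\in\ag^6$). Your bound $N=\sum_{j=1}^k(n+m_j-1)+1$ is the correct one — if all $a_j\leq n+m_j-1$ then $\sum a_j\leq N-1$ — so your version of the pigeonhole argument repairs a (presumably typographical) error in the paper's exponent while also allowing the $m_i$ to vary with $i$. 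In short, same method, but your injectivity step is the more careful one.
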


\begin{proof} 
	By view of \ref{sup-9} (b) there are homomorphisms $\xi^i_M : \mathcal{D}_{\ag}(M) \to M_{x_i}$ for $i = 1,\ldots,k$ such that the composite $M\stackrel{\tau_M}\to \mathcal{D}_{\ag}(M) \to M_{x_i}$ is the natural map $M\to M_{x_i}$. Then the homomorphism $\rho_M$ is defined by $\mathcal{D}_{\ag}(M) \to 
	\oplus_{i=1}^k M_{x_i}, f \mapsto (\xi_M^i(f))_{i=1}^k$.
	
	In order to show that $\rho_M$ is injective suppose that 
	$f \in \mathcal{D}_{\ag}(M)$ maps to zero, i.e. $(\xi_M^i(f))_{i=1}^k = 0$. 
	Let $g_n \in \Hom_R(\mathfrak{a}^n,M)$ denote a representative of $f$. 
	Then $g_n(x_i^n)/x_i^n =0$ for $i = 1,\ldots,k$. Therefore, there is an 
	integer $m \geq n$ such that $x_i^mg_n(x_i^{n}) = 0$ for all $i = 1,\ldots,k$.  
	Because of $\ag^{n+m+k}\subseteq (x_1^{n+m},\ldots,x_k^{n+m})R$
	this implies $g_n(\ag^{m+n+k})=0$. Whence the restriction of $g_n$ to  
	$\ag^{m+n+k}$ is zero and therefore $f = 0$.
	
	For the final claim we have that $H^1(\check{D}_{\xx}\otimes_RM) = 
	\Ker (\oplus_i M_{x_i} \stackrel{d^1}\to \oplus_{i < j} M_{x_ix_j})$
    and note that $\mathcal{D}_{\ag}(M) \subseteq \Ker d^1$. Indeed let 
    $f \in {\mathcal{D}}_{\ag}(M)$ and $g_n \in \Hom_R(\ag^n,M)$ a 
    representative of $f$. This $f$ is mapped to 
	$(g_n(x_i^n)/x_i^n)_{i=1}^k \in \oplus_{i =1}^k M_{x_i}$ which is well 
	defined and belongs to $\Ker d^1$ since in $M_{x_ix_j}$ we have the 
	equalities $g_n(x_i^n)/x_i^n = g_n(x_j^n)/x_j^n$ for all $i, j \in \{1,\ldots,k\}$. 
\end{proof}

\begin{corollary} \label{sup-5}	
	Let $\xx$ and $\ag$ be as in \ref{sup-10}. Let $M$ denote an $R$-module.
	\begin{itemize}
		\item[(a)] 
		The natural map $H^1_{\ag}(M) \to H^1(\Cech \otimes_R M)$ is injective and there is a commutative diagram with exact rows
		\[
		\xymatrix{
			0 \ar[r] & \Gamma_{\ag}(M) \ar[r]  \ar@2{-}[d] & M \ar[r] \ar@2{-}[d] & \mathcal{D}_{\ag}(M) 
			\ar[r] \ar@{^{(}->}[d] & H^1_{\ag}(M )\ar[r] \ar@{^{(}->}[d] & 0 \\
			0 \ar[r] & H^0(\check{C}_{\xx}\otimes_RM) \ar[r]  & M \ar[r]  & H^1(\check{D}_{\xx}\otimes_RM)
			\ar[r] & H^1(\check{C}_{\xx}\otimes_RM)\ar[r]  & 0.
		}
		\]
		In particular there is an isomorphism $H^1(\check{D}_{\xx}\otimes_RM)/\mathcal{D}_{\ag}(M) \cong 
		H^1(\check{C}_{\xx}\otimes_RM)/H^1_{\ag}(M )$.
		\item[(b)] The natural injection $M/\gam (M) \hookrightarrow H^1(\check{D}_{\xx}\otimes_R M)$ factors through the injection 
		$\mathcal{D}_{\ag}(M) \hookrightarrow H^1(\check{D}_{\xx}\otimes_RM)$.
		\item[(c)] 
		Suppose that the sequence $\xx$ is weakly pro-regular. 
		Then we have the isomorphism 
		$$ 
		\mathcal{D}_{\ag}(M) \cong H^1(\check{D}_{\xx}\otimes_RM).
		$$
	\end{itemize}
\end{corollary}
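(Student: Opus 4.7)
The strategy is to display both rows of the diagram in (a) as four-term exact sequences extracted from standard homological machinery, then to compare them via a snake-lemma-type argument and finally to read off (b) and (c) as consequences.

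For the top row, I would simply quote the exact sequence of Proposition \ref{sup-9}(a). For the bottom row, I would exploit the definition of $\check{D}_{\xx}$ as the kernel of the augmentation $\Cech \to R$: this gives a short exact sequence of complexes
\[
0 \to \check{D}_{\xx} \to \Cech \to R \to 0,
\]
which stays exact after $\otimes_R M$ because it is degreewise split. Since $\check{D}_{\xx}$ is concentrated in degrees $\geq 1$ and $R$ is concentrated in degree $0$, the associated long exact cohomology sequence collapses to
\[
0 \to H^0(\Cech\otimes_R M) \to M \to H^1(\check{D}_{\xx}\otimes_R M) \to H^1(\Cech\otimes_R M) \to 0,
\]
and $H^0(\Cech\otimes_R M) = \Gamma_{\ag}(M)$ since $\ag$ is finitely generated by $\xx$. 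This is the bottom row.

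For the vertical maps I would take the identity on the first two columns, the injection $\mathcal{D}_{\ag}(M) \hookrightarrow H^1(\check{D}_{\xx}\otimes_R M)$ supplied by Proposition \ref{sup-10}, and the natural map $H^1_{\ag}(M) \to H^1(\Cech\otimes_R M)$ recalled in \ref{rec1}. Commutativity of the leftmost square is the identification $\Gamma_{\ag}(M) = H^0(\Cech\otimes_R M)$; commutativity of the middle square is precisely the content of Proposition \ref{sup-10}, namely $\rho_M \circ \tau_M$ equals the natural map $M \to \bigoplus_i M_{x_i}$, which is the same as the connecting homomorphism $M \to H^1(\check{D}_{\xx}\otimes_R M)$. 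For the rightmost square I would trace a representative $g_n \in \Hom_R(\ag^n, M)$ of a class in $\mathcal{D}_{\ag}(M)$: both paths land on the element $(g_n(x_i^n)/x_i^n)_{i=1}^k \in \bigoplus_i M_{x_i}$ inside $H^1(\Cech\otimes_R M)$, using on one side the description of $\rho_M$ from \ref{sup-10} followed by the map $H^1(\check{D}_{\xx}\otimes_R M) \to H^1(\Cech\otimes_R M)$, and on the other the naturality of the map $H^1_{\ag}(M) \to H^1(\Cech \otimes_R M)$ applied to the Ext-class of $g_n$ extracted from $0 \to \ag^n \to R \to R/\ag^n \to 0$. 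With commutativity in hand, the snake lemma applied to the two rows delivers both assertions of (a): the injectivity of $H^1_{\ag}(M) \to H^1(\Cech\otimes_R M)$ (kernel of a vertical map whose predecessor is the injection from \ref{sup-10}) and the cokernel identification $H^1(\check{D}_{\xx}\otimes_R M)/\mathcal{D}_{\ag}(M) \cong H^1(\Cech\otimes_R M)/H^1_{\ag}(M)$.

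Assertion (b) is then essentially a restatement of the middle square: the composition $M \to \mathcal{D}_{\ag}(M) \hookrightarrow H^1(\check{D}_{\xx}\otimes_R M)$ equals the natural connecting map of the bottom row, whose kernel is $\Gamma_{\ag}(M)$. Passing to the quotient exhibits the injection $M/\Gamma_{\ag}(M) \hookrightarrow H^1(\check{D}_{\xx}\otimes_R M)$ as a factorization through $\mathcal{D}_{\ag}(M)$. For (c), when $\xx$ is weakly pro-regular the recalls in \ref{rec1} give that $H^1_{\ag}(M) \to H^1(\Cech\otimes_R M)$ is an isomorphism, so its cokernel vanishes; the cokernel isomorphism from (a) then forces $H^1(\check{D}_{\xx}\otimes_R M)/\mathcal{D}_{\ag}(M) = 0$, i.e.\ the inclusion in \ref{sup-10} is an equality.

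The main obstacle is the verification that the rightmost square of the diagram in (a) commutes, because it compares a connecting map $\mathcal{D}_{\ag}(M) \to H^1_{\ag}(M)$ built via $\Ext^1(R/\ag^n, M)$ with a connecting map $\mathcal{D}_{\ag}(M) \to H^1(\check{D}_{\xx}\otimes_R M)$ built via the \v{C}ech complex. The cleanest way will be to pick a representative $g_n \in \Hom_R(\ag^n, M)$ and to check that, under the natural map $H^1_{\ag}(M) \to H^1(\Cech\otimes_R M)$, the class represented by $g_n$ is sent to $(g_n(x_i^n)/x_i^n)_{i}$, matching the image produced by $\rho_M$ from \ref{sup-10}; everything else is formal diagram chasing.
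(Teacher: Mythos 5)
Your proposal is correct and follows essentially the same route as the paper: both rows come from Proposition \ref{sup-9}(a) and the long exact cohomology sequence of $0 \to \check{D}_{\xx}\otimes_R M \to \Cech\otimes_R M \to M \to 0$, with the vertical injection supplied by Proposition \ref{sup-10}, after which injectivity of the fourth map and the cokernel identification follow by a diagram chase (the paper compresses your snake-lemma argument into a single sentence and declares the commutativity ``rather obvious''). Parts (b) and (c) are handled exactly as in the paper.
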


\begin{proof} 
	(a)	There is a short exact sequence  $0 \to \check{D}_{\xx}\otimes_RM 
	\to \check{C}_{\xx}\otimes_RM \to M \to 0$ of complexes (see 
	\cite[6.1.6]{SpSa}). The short exact sequence at the bottom of the diagram 
	follows by the associated long exact sequence in cohomology, while the one at 
	the top   is shown in \ref{sup-9} (a). The commutativity of the diagram is 
	rather obvious. Because the third vertical  map $ \mathcal{D}_{\ag}(M) \to 
	H^1(\check{D}_{\xx}\otimes_RM)$ is injective (see \ref{sup-10}), so is the 
	fourth one. The last statement in (a) follows now. 
	
	(b) The statement follows by the diagram.

	(c) If the sequence $\xx$ is weakly pro-regular the fourth map in the diagram is an isomorphism (see \ref{rec1}). Whence the third map also is an isomorphism.
\end{proof}

The isomorphism in \ref{sup-5} (c) originally proved for a Noetherian ring  is 
known as Deligne's formula (see \cite[Exercise 3.7]{Hr} or \cite[20.1.14]{BS}). It 
does not hold necessarily either for non-Noetherian rings nor for injective modules.

\begin{example} 
	Take the ring $S=R\ltimes E$ of Example \ref{bisexrelinj} and take 
	$\ag=(x,0)S$. We noted that $S$ is self-injective and with 
	$\gam (S) = 0 \ltimes E$. Hence $\mathcal{D}_{\ag}(S) =R \subsetneqq
	H^1(\check{D}_{(x, 0)}\otimes _SS) =S_{(x, 0)}=R_x$. 
\end{example}

\textit{Note added in proof.} Theorem \ref{weak} has been shown independently 
by R. Vyas and A. Yekutieli (see: Weak Stability, and the Noncommutative MGM Equivalence, J. Algebra 513 (2018), 265-325), where the notion of "flasque module" 
is used instead of "relative injective". Thanks to A. Yekutieli for drawing our attention to their paper.

\end{document}